\documentclass[11pt]{amsart}

\usepackage{amsmath}
\usepackage{amssymb}
\usepackage{amscd}
\usepackage{color}

\begin{document}

\title{Strong $J$-cleanness of formal matrix rings}

\author{Orhan Gurgun}
\address{Orhan Gurgun, Department of Mathematics, Ankara University,  Turkey}
\email{orhangurgun@gmail.com}

\author{Sait Halicioglu}
\address{Sait Hal\i c\i oglu,  Department of Mathematics, Ankara University, Turkey}
\email{halici@ankara.edu.tr}

\author{Abdullah Harmanci}
\address{Abdullah Harmanci, Department of Maths, Hacettepe University,  Turkey}
\email{harmanci@hacettepe.edu.tr}

\date{\empty}
\date{}
\newtheorem{thm}{Theorem}[section]
\newtheorem{lem}[thm]{Lemma}
\newtheorem{prop}[thm]{Proposition}
\newtheorem{cor}[thm]{Corollary}
\newtheorem{exs}[thm]{Examples}
\newtheorem{defn}[thm]{Definition}
\newtheorem{nota}{Notation}
\newtheorem{rem}[thm]{Remark}
\newtheorem{ex}[thm]{Example}

\maketitle

\begin{abstract}An element $a$ of a ring $R$ is called \emph{strongly $J$-clean} provided that there
exists an idempotent $e\in R$ such that $a-e\in J(R)$ and $ae=ea$.
A ring $R$ is \emph{strongly $J$-clean} in case every element in
$R$ is strongly $J$-clean. In this paper, we investigate strong
$J$-cleanness of $M_2(R;s)$ for a local ring $R$ and $s\in R$. We
determine the conditions under which elements of $M_2(R;s)$ are
strongly $J$-clean.
 \\[+2mm]
{\bf Keywords:} Strongly $J$-clean, strongly clean, strongly nil
clean, formal matrix ring, local ring.
\thanks{ \\{\bf 2010 Mathematics Subject Classification:} 16S50,
16S70, 16U99}
\end{abstract}

\section{Introduction}

Throughout this paper all rings are associative with identity
unless otherwise stated. We write $R[[x]]$, $U(R)$, $C(R)$,
$Nil(R)$ and $J(R)$ for the power series ring over a ring $R$, the
set of all invertible elements, the set of all central elements,
the set of all nilpotent elements and the Jacobson radical of $R$,
respectively.

Following \cite{N}, we define an element $a$ of a ring $R$ to be
\emph{clean} if there is an idempotent $e\in R$ such that $a-e$ is
a unit of $R$. A \emph{clean ring} is defined to be one in which
every element is clean. In \cite{N2}, an element $a$ of a ring $R$
is defined to be \emph{strongly clean} if there is an idempotent
$e\in R$, which commutes with $a$, such that $a-e$ is invertible
in $R$. Analogously, a \emph{strongly clean ring} is one in which
every element is strongly clean. Chen \cite{C1} defines strongly
$J$-clean rings. An element $a\in R$ is called \emph{strongly
$J$-clean} provided that there exists an idempotent $e\in R$ such
that $a-e\in J(R)$ and $ae=ea$. A ring $R$ is \emph{strongly
$J$-clean} in case every element in $R$ is strongly $J$-clean.
Many variations of the notions of clean and strongly clean have
been studied by a variety of authors and it is still a very
popular subject.

The question of when a matrix ring $M_n(R)$ is strongly clean has
been discussed by several authors (see \cite{C, C1, C2, D, TZ,
YZ}). Recently, Tang and Zhou \cite{TZ, TZ2} completely
characterized the local rings $R$ for which the formal matrix ring
$M_2(R;s)$ is strongly clean. In this article, we are motivated to
investigate the strong $J$-cleanness of the formal matrix rings
over a local ring.

Given a ring $R$ and an element $s\in C(R)$, the $4$-tuple $\left[%
\begin{array}{cc}
  R & R \\
  R & R \\
\end{array}%
\right]$ becomes a ring with addition defined componentwise and
with multiplication defined by

 $$\begin{array}{c}
\left[
\begin{array}{cc}
a&b\\
c&d
\end{array}
\right]\left[
\begin{array}{cc}
a'&b'\\
c'&d'
\end{array}
\right]=\left[
\begin{array}{cc}
aa'+s^2bc'&ab'+bd'\\
ca'+dc'&s^2cb'+dd'
\end{array}
\right].
\end{array}$$

\noindent This ring is denoted by $M_2(R;s)$ in \cite{TZ2}. When
$s = 1$, $M_2(R;s)$ is just the matrix ring $M_2(R)$, but
generally $M_2(R,s)$ can be significantly different from $M_2(R)$
(see \cite{TZ2}). Some properties of $M_2(R;s)$ were studied in \cite{TZ2}. For $s\in C(R)$, the $4$-tuple $\left[%
\begin{array}{cc}
  R & R \\
  R & R \\
\end{array}%
\right]$ becomes a ring with addition defined componentwise and
with multiplication defined by

 $$\begin{array}{c}
\left[
\begin{array}{cc}
a&b\\
c&d
\end{array}
\right]\left[
\begin{array}{cc}
a'&b'\\
c'&d'
\end{array}
\right]=\left[
\begin{array}{cc}
aa'+sbc'&ab'+bd'\\
ca'+dc'&scb'+dd'
\end{array}
\right].
\end{array}$$

\noindent This ring is denoted by $K_s(R)$ by Krylov in \cite{K},
and is discussed in \cite{K, KT, TZ}. In our notation,
$M_2(R;s)=K_{s^2}(R)$. By \cite[Proposition 4]{TZ2}, for a local
ring $R$ and an element $s\in C(R)$, $M_2(R;s)\cong M_2(R)$ if and
only if $s\in U(R)$.

In what follows,  the ring of integers modulo $n$ is denoted by
$\mathbb{Z}_n$, and we write $M_n(R)$ (resp. $T_n(R)$) for the
rings of all (resp., all upper triangular) $n\times n$ matrices
over the ring $R$. For $s\in C(R)$, $\big(s:J(R)\big)=\{x\in
R~|~sx\in J(R)\}$ is denoted by $J_s(R)$. Note that
$J_s(R)=J_{s^2}(R)$ for $s\in C(R)$ (see \cite[Lemma 10]{TZ2}).
For elements $a,b$ in a ring $R$, we use the notation $a\sim b$ to
mean that $a$ \textit{is similar to} $b$, that is, $b=u^{-1}au$
for some $u\in U(R)$. For elements $a,b$ in a ring $R$, we say
that \textit{$a$ is equivalent to $b$} if there exist $u,v\in
U(R)$ such that $b=uav$.

\section{Strongly $J$-Clean Elements}

In this section, we consider the question of when $M_2(R;s)$ is
strongly $J$-clean for a local ring $R$. We start with some useful
results.

\begin{thm}\label{teoor1}\cite[Theorem 11]{TZ2} Let $R$ be a ring
with $s\in C(R)$. Then $J\big(M_2(R;s)\big)=\left[%
\begin{array}{cc}
  J(R) & J_s(R) \\
  J_s(R) & J(R) \\
\end{array}%
\right]$.
\end{thm}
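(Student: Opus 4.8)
The plan is to prove the two inclusions separately, after first recording that the displayed set, which we denote
\[
A=\left[\begin{array}{cc} J(R) & J_s(R) \\ J_s(R) & J(R)\end{array}\right],
\]
is a two-sided ideal of $M_2(R;s)$. Since $s\in C(R)$, the set $J_s(R)=\{x\in R: sx\in J(R)\}$ is readily seen to be a two-sided ideal of $R$ containing $J(R)$, and it satisfies $s^2J_s(R)\subseteq J(R)$. Multiplying an arbitrary element of $M_2(R;s)$ against an element of $A$ on either side and reading off the entries via the twisted product rule, each entry lands back in the prescribed ideal; the only places where the twist matters are the terms of the shape $s^2bz$ and $s^2cy$ with $z,y\in J_s(R)$, and there $s^2z,s^2y\in J(R)$ settle the matter. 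Hence $A\trianglelefteq M_2(R;s)$.

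To see that $A\subseteq J\big(M_2(R;s)\big)$ it suffices to check that $I+M$ is left invertible for every $M=\left[\begin{smallmatrix} x & y \\ z & w\end{smallmatrix}\right]\in A$, since an ideal all of whose elements are left quasi-regular is contained in the Jacobson radical. Here $1+x$ and $1+w$ are units of $R$. Writing the equations for a left inverse $\left[\begin{smallmatrix} p & q \\ r & t\end{smallmatrix}\right]$ of $I+M$ and eliminating $q$ and $t$ reduces everything to inverting the Schur-type element $(1+x)-s^2y(1+w)^{-1}z$; because $y,z\in J_s(R)$ we have $s^2y(1+w)^{-1}z=(sy)(1+w)^{-1}(sz)\in J(R)$, so this element lies in $1+J(R)$ and is a unit of $R$. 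Solving back then gives an explicit left inverse of $I+M$, whence $A\subseteq J\big(M_2(R;s)\big)$.

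For the reverse inclusion, fix $M=\left[\begin{smallmatrix} a & b \\ c & d\end{smallmatrix}\right]\in J\big(M_2(R;s)\big)$ and let $E_{11},E_{22}$ be the diagonal matrix units, which are orthogonal idempotents of $M_2(R;s)$ summing to $I$. The corner ring $E_{ii}M_2(R;s)E_{ii}$ is isomorphic to $R$, and $J(eSe)=e\,J(S)\,e$ for any idempotent $e$ in any ring $S$; since $E_{11}ME_{11}=\left[\begin{smallmatrix} a & 0 \\ 0 & 0\end{smallmatrix}\right]$ and $E_{22}ME_{22}=\left[\begin{smallmatrix} 0 & 0 \\ 0 & d\end{smallmatrix}\right]$ lie in $J\big(M_2(R;s)\big)$, this forces $a,d\in J(R)$. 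For the off-diagonal entries, $E_{11}ME_{22}=\left[\begin{smallmatrix} 0 & b \\ 0 & 0\end{smallmatrix}\right]\in J\big(M_2(R;s)\big)$, and left multiplication by $\left[\begin{smallmatrix} 0 & 0 \\ 1 & 0\end{smallmatrix}\right]$ gives $\left[\begin{smallmatrix} 0 & 0 \\ 0 & s^2b\end{smallmatrix}\right]\in J\big(M_2(R;s)\big)$; applying the $E_{22}$-corner identity once more yields $s^2b\in J(R)$, i.e. $b\in J_{s^2}(R)=J_s(R)$ by \cite[Lemma 10]{TZ2}. Symmetrically, from $E_{22}ME_{11}=\left[\begin{smallmatrix} 0 & 0 \\ c & 0\end{smallmatrix}\right]$ and right multiplication by $\left[\begin{smallmatrix} 0 & 1 \\ 0 & 0\end{smallmatrix}\right]$ one gets $s^2c\in J(R)$, so $c\in J_s(R)$. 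Thus $M\in A$, and the two inclusions give the claim.

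The step I expect to be the main obstacle is pinning down the exact condition on the off-diagonal entries: the diagonal entries fall out of a routine Peirce-corner argument, but to transfer $b$ (and $c$) into a diagonal corner one must multiply by matrix units, which in $M_2(R;s)$ unavoidably drags in the twist factor $s^2$, so the argument produces $s^2b\in J(R)$ rather than $sb\in J(R)$; recognizing these as the same condition is exactly what the identity $J_s(R)=J_{s^2}(R)$ supplies, and keeping the bookkeeping of the twist straight in every product is where care is needed.
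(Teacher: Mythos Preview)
The paper does not supply its own proof of this statement: Theorem~\ref{teoor1} is quoted verbatim from \cite[Theorem 11]{TZ2} and used as background. So there is no in-paper argument to compare against; one can only assess the proposal on its own merits.

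Your argument is correct. The verification that $A$ is a two-sided ideal is routine once one notes $s^2J_s(R)\subseteq J(R)$, and the Schur-complement computation showing $I+M$ is (left) invertible for $M\in A$ is sound because $(sy)(1+w)^{-1}(sz)\in J(R)$ forces the complement into $1+J(R)$. For the reverse inclusion, the Peirce-corner identity $J(eSe)=eJ(S)e$ handles the diagonal entries, and your trick of multiplying $E_{11}ME_{22}$ by the matrix unit $E_{21}$ to produce $\left[\begin{smallmatrix}0&0\\0&s^2b\end{smallmatrix}\right]$ inside the $E_{22}$-corner is exactly right in the twisted multiplication; the invocation of $J_{s^2}(R)=J_s(R)$ from \cite[Lemma 10]{TZ2} then closes the loop. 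This is essentially the standard route one finds for the radical of a Morita-context or generalized matrix ring, specialized to $M_2(R;s)$, and it is in the same spirit as the treatment in \cite{TZ2} and \cite{KT}. One minor remark: having shown $A$ is a two-sided ideal, it is enough to show each $I+M$ is \emph{one}-sided invertible (the ideal property upgrades this to two-sided), so your ``left inverse'' phrasing is fine without further justification.
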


\begin{lem} \label{lem2} Let $R$ be a local ring with $s\in C(R)\cap J(R)$. Then $J\big(M_2(R;s)\big)=\left[%
\begin{array}{cc}
  J(R) & R \\
  R & J(R) \\
\end{array}%
\right]$ and, moreover, $\left[%
\begin{array}{cc}
 a & x\\
  y & b \\
\end{array}%
\right]\in U\big(M_2(R;s)\big)$ if and only if $a,b\in U(R)$.
\end{lem}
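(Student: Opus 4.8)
The first claim follows immediately from Theorem~\ref{teoor1}: since $s \in C(R) \cap J(R)$, we have $J_s(R) = \{x \in R \mid sx \in J(R)\} = R$ because $sx \in J(R)$ for every $x$ (as $s \in J(R)$ and $J(R)$ is an ideal). Substituting $J_s(R) = R$ into the formula $J(M_2(R;s)) = \left[\begin{array}{cc} J(R) & J_s(R) \\ J_s(R) & J(R)\end{array}\right]$ gives the desired description. I would state this in a single line.

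For the second claim, one direction is essentially free. If $\alpha = \left[\begin{array}{cc} a & x \\ y & b\end{array}\right] \in U(M_2(R;s))$, I would reduce modulo $J(M_2(R;s))$. By the first part, the quotient $M_2(R;s)/J(M_2(R;s))$ is $\left[\begin{array}{cc} R/J(R) & 0 \\ 0 & R/J(R)\end{array}\right] \cong (R/J(R)) \times (R/J(R))$ (the off-diagonal entries die). A unit maps to a unit, so the images $\bar a, \bar b$ are units in the division ring $R/J(R)$, which forces $a, b \in U(R)$ since $R$ is local.

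The reverse direction is the substantive part. Assume $a, b \in U(R)$; I want to produce an inverse for $\alpha$. The natural approach is to row/column reduce: write $\alpha = \left[\begin{array}{cc} a & x \\ y & b\end{array}\right]$ and factor out the invertible diagonal part, or use the observation that $\left[\begin{array}{cc} a & x \\ y & b\end{array}\right] = \left[\begin{array}{cc} a & x \\ 0 & b - y a^{-1} x \cdot (\text{correction})\end{array}\right] \cdot (\cdots)$ — but one must be careful because in $M_2(R;s)$ the multiplication has the twist $s^2$ in the $(1,1)$ and $(2,2)$ entries. Concretely, I would verify that $\left[\begin{array}{cc} 1 & 0 \\ y & 1\end{array}\right]$ is a unit in $M_2(R;s)$ for any $y$ (its inverse should be $\left[\begin{array}{cc} 1 & 0 \\ -y & 1\end{array}\right]$, using that the $(2,1)$ product term is $y \cdot 1 + 1 \cdot(-y) = 0$ and the twisted terms $s^2 \cdot 1 \cdot 0$ vanish), and similarly $\left[\begin{array}{cc} 1 & x \\ 0 & 1\end{array}\right] \in U(M_2(R;s))$. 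Then left- and right-multiplying $\alpha$ by such unipotent units and by $\mathrm{diag}(a^{-1}, b^{-1})$, I can try to reduce $\alpha$ to something of the form $\left[\begin{array}{cc} 1 & 0 \\ 0 & u\end{array}\right]$ with $u \in U(R)$; alternatively, and more cleanly, since $\alpha - \left[\begin{array}{cc} a & 0 \\ 0 & b\end{array}\right] = \left[\begin{array}{cc} 0 & x \\ y & 0\end{array}\right] \in J(M_2(R;s))$ by the first part, and $\left[\begin{array}{cc} a & 0 \\ 0 & b\end{array}\right]$ is clearly a unit (with inverse $\left[\begin{array}{cc} a^{-1} & 0 \\ 0 & b^{-1}\end{array}\right]$, noting the twisted cross-terms are zero), $\alpha$ is a unit plus an element of the Jacobson radical, hence a unit. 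This last observation is really the shortest route.

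The main obstacle to watch for is simply keeping the twisted multiplication straight: one must confirm that $\mathrm{diag}(a^{-1},b^{-1})$ genuinely is the two-sided inverse of $\mathrm{diag}(a,b)$ in $M_2(R;s)$ (the $s^2$-twist only affects products involving off-diagonal entries, so diagonal matrices multiply as usual), and that the first part of the lemma has been correctly applied to place $\left[\begin{array}{cc} 0 & x \\ y & 0\end{array}\right]$ inside $J(M_2(R;s))$. Once those two routine checks are in hand, the standard fact that $U(R) + J(R) \subseteq U(R)$ — applied inside $M_2(R;s)$ — closes the argument.
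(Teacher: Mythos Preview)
Your proposal is correct; the paper itself gives no argument beyond the single sentence ``It is straightforward,'' so your plan supplies exactly the details the authors omit. The derivation of $J_s(R)=R$ from $s\in J(R)$ and the use of Theorem~\ref{teoor1} is the intended route for the first claim, and for the unit criterion your ``unit plus radical element'' argument (together with the observation that diagonal matrices multiply untwisted) is the cleanest way to finish.
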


\begin{proof} It is straightforward.
\end{proof}

\begin{ex}\label{ex1} \rm{ Let $R$ be a ring.
\begin{itemize}
\item[(1)] Every element in $J(R)$ is strongly $J$-clean.
\item[(2)] $u\in U(R)$ and $u$ is strongly $J$-clean if and only if $u-1\in J(R)$.
\item[(3)] $a\in R$ is strongly $J$-clean if and only if $1-a\in
R$ is strongly $J$-clean.
\end{itemize}}
\end{ex}

\begin{lem} \label{lem4} Let $R$ be a ring with $s\in C(R)$. Then $A\in
U\big(M_2(R;s)\big)$ and $A$ is strongly $J$-clean if and only if
$A-I_2\in J\big(M_2(R;s)\big)$.
\end{lem}

\begin{proof} Assume that $A\in
U\big(M_2(R;s)\big)$ and $A$ is strongly $J$-clean. By
Example~\ref{ex1}, $A-I_2\in J\big(M_2(R;s)\big)$. Conversely,
suppose that $A-I_2\in J\big(M_2(R;s)\big)$. Then $A$ is strongly
$J$-clean. Since $A\in 1+J\big(M_2(R;s)\big)\subseteq
U\big(M_2(R;s)\big)$, $A\in U\big(M_2(R;s)\big)$. The proof is
completed.
\end{proof}

\begin{lem} \label{lem6} Let $R$ be a ring with $s\in C(R)$ and let $P\in U\big(M_2(R;s)\big)$. Then $A\in M_2(R;s)$ is
strongly $J$-clean if and only if $PAP^{-1}\in M_2(R;s)$ is
strongly $J$-clean.
\end{lem}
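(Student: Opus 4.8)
The plan is to verify directly that conjugation by a unit transports a strongly $J$-clean decomposition to a strongly $J$-clean decomposition, using only two general facts: that $J\big(M_2(R;s)\big)$ is a two-sided ideal of $M_2(R;s)$, and that for $P\in U\big(M_2(R;s)\big)$ the map $X\mapsto PXP^{-1}$ is a ring automorphism of $M_2(R;s)$. No explicit matrix computation inside $M_2(R;s)$ is needed.

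First I would assume that $A\in M_2(R;s)$ is strongly $J$-clean, so that there is an idempotent $E\in M_2(R;s)$ with $A-E\in J\big(M_2(R;s)\big)$ and $AE=EA$. Put $F=PEP^{-1}$. Since $X\mapsto PXP^{-1}$ is a ring automorphism, $F$ is again an idempotent, and $F$ commutes with $PAP^{-1}$ because $E$ commutes with $A$. Moreover $PAP^{-1}-F=P(A-E)P^{-1}\in J\big(M_2(R;s)\big)$, since the Jacobson radical of any ring is invariant under ring automorphisms (equivalently, $J\big(M_2(R;s)\big)$ being a two-sided ideal is stable under the inner automorphism determined by $P$). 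Hence $F$ witnesses that $PAP^{-1}$ is strongly $J$-clean.

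For the converse I would simply apply the implication just proved to the unit $P^{-1}\in U\big(M_2(R;s)\big)$ and the element $PAP^{-1}$: if $PAP^{-1}$ is strongly $J$-clean, then so is $P^{-1}(PAP^{-1})P=A$. There is no genuine obstacle in this argument; the only point to be careful about is to argue abstractly via the automorphism $X\mapsto PXP^{-1}$ — preservation of idempotents, of the commutation relation, and of the Jacobson radical — rather than through any componentwise computation in $M_2(R;s)$, so that the special multiplication rule of $M_2(R;s)$ never enters. (One could invoke the description of $J\big(M_2(R;s)\big)$ from Theorem~\ref{teoor1}, but it is not needed here.)
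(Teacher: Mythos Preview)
Your proof is correct and follows essentially the same route as the paper: set $F=PEP^{-1}$, note it is idempotent, observe $PAP^{-1}-F=P(A-E)P^{-1}\in J\big(M_2(R;s)\big)$, and check the required commutativity; the paper does exactly this (writing $W=A-E$, $V=PWP^{-1}$ and verifying $FV=VF$ rather than $F\cdot PAP^{-1}=PAP^{-1}\cdot F$, which is equivalent). Your converse via $P^{-1}$ is the content of the paper's ``Clearly~\dots'' sentence.
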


\begin{proof} Assume that $A\in M_2(R;s)$ is
strongly $J$-clean. Then there exists $E^2=E\in M_2(R;s)$ such
that $A-E=W\in J\big(M_2(R;s)\big)$ and $EW=WE$. Let $F=PEP^{-1}$
and $V=PWP^{-1}$. Then $F^2=F$ and $V\in J\big(M_2(R;s)\big)$.
Since $A-E=W\in J\big(M_2(R;s)\big)$, we have $PAP^{-1}-F=V\in
J\big(M_2(R;s)\big)$. Further, since $EW=WE$,
$FV=PEP^{-1}PWP^{-1}=PEWP^{-1}=PWEP^{-1}=PWP^{-1}PEP^{-1}=VF$.
Hence $PAP^{-1}\in M_2(R;s)$ is strongly $J$-clean. Clearly, if
$PAP^{-1}\in M_2(R;s)$ is strongly $J$-clean for some $P\in
U\big(M_2(R;s)\big)$, then $A\in M_2(R;s)$ is strongly $J$-clean.
\end{proof}

\begin{lem}\label{temel1} Let $R$ be a local ring with $s\in C(R)\cap J(R)$.
Then $A^n\in J\big(M_2(R;s)\big)$ for some $n\in \Bbb N$ if and
only if $A\in J\big(M_2(R;s)\big)$.
\end{lem}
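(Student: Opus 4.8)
The forward implication is immediate, since $J\big(M_2(R;s)\big)$ is a two-sided ideal: if $A\in J\big(M_2(R;s)\big)$ then $A^n\in J\big(M_2(R;s)\big)$ for every $n\in\Bbb N$. So the content is the converse, and my plan is to pass to the quotient ring $M_2(R;s)/J\big(M_2(R;s)\big)$ and show it has no nonzero nilpotent elements.

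First I would identify this quotient explicitly. By Lemma~\ref{lem2}, $J\big(M_2(R;s)\big)=\left[\begin{array}{cc} J(R) & R \\ R & J(R)\end{array}\right]$. Since $s\in J(R)$, we have $s^2\in J(R)$, so in the multiplication formula for $M_2(R;s)$ the entries $s^2bc'$ and $s^2cb'$ always lie in $J(R)$. Consequently the map $\left[\begin{array}{cc} a & x \\ y & b\end{array}\right]\mapsto\big(a+J(R),\,b+J(R)\big)$ is a ring homomorphism from $M_2(R;s)$ onto $R/J(R)\times R/J(R)$, and its kernel is precisely $\left[\begin{array}{cc} J(R) & R \\ R & J(R)\end{array}\right]=J\big(M_2(R;s)\big)$. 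Hence $M_2(R;s)/J\big(M_2(R;s)\big)\cong R/J(R)\times R/J(R)$.

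Now, because $R$ is local, $R/J(R)$ is a division ring and therefore has no nonzero nilpotents; the same is then true of $R/J(R)\times R/J(R)$, coordinatewise. So if $A^n\in J\big(M_2(R;s)\big)$ for some $n$, the image $\bar A$ of $A$ in $M_2(R;s)/J\big(M_2(R;s)\big)$ satisfies $\bar A^{\,n}=0$, which forces $\bar A=0$; that is, $A\in J\big(M_2(R;s)\big)$, as required.

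I do not expect a real obstacle here. The one point that needs care is checking that the stated map is genuinely multiplicative — this is exactly where the hypothesis $s\in J(R)$ is used, and it is essential, since for general $s$ the quotient need not decompose as a product of division rings. As an alternative that avoids the quotient, one can argue by contraposition: if $A=\left[\begin{array}{cc} a & x \\ y & b\end{array}\right]\notin J\big(M_2(R;s)\big)$, then by Lemma~\ref{lem2} at least one of $a,b$ — say $a$ — lies in $U(R)$, and an easy induction using $s^2\in J(R)$ shows that the $(1,1)$-entry of $A^n$ remains a unit for every $n$, so $A^n\notin J\big(M_2(R;s)\big)$.
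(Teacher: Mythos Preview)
Your proof is correct. In fact, the alternative you sketch at the end---computing the diagonal entries of $A^n$ as $a^n+s^2(\ast)$ and $d^n+s^2(\ast)$ and using $s^2\in J(R)$---is exactly the argument the paper gives. Your primary approach via the explicit isomorphism $M_2(R;s)/J\big(M_2(R;s)\big)\cong R/J(R)\times R/J(R)$ is a more structural repackaging of the same observation: the paper is implicitly working in this quotient when it discards the $s^2(\ast)$ terms, whereas you make the quotient ring and its reducedness explicit. Both rest on the same key point (the off-diagonal contributions die modulo $J(R)$ because $s\in J(R)$), so the difference is presentational rather than mathematical; your version has the advantage of naming the quotient once and for all, which would streamline any further arguments of the same type.
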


\begin{proof} Assume that $A^n\in J\big(M_2(R;s)\big)$ for some $n\in \Bbb N$ and let $A=\left[%
\begin{array}{cc}
 a & b\\
 c & d \\
\end{array}%
\right]$ where $a, b, c, d\in R$. This gives $A^n=\left[%
\begin{array}{cc}
 a^n+s^2\ast & \ast\\
 \ast & d^n+s^2\ast \\
\end{array}%
\right]\in J\big(M_2(R;s)\big)$. Then $a^n+s^2\ast\in J(R)$ and
$d^n+s^2\ast\in J(R)$ by Lemma~\ref{lem2}. Since $R$ is a local
ring and $s\in J(R)$, we have $a,d\in J(R)$. Hence, by
Lemma~\ref{lem2}, $A\in J\big(M_2(R;s)\big)$.
\end{proof}

Recall that an element $a$ of a ring $R$ is \textit{strongly nil
clean} in case there is an idempotent $e\in R$ such that $ae=ea$
and $a-e\in Nil(R)$. A ring $R$ is \textit{strongly nil clean} in
case every element in R is strongly nil clean (see \cite{D}). Note
that this result shows that if $A$ is strongly nil clean in
$M_2(R;s)$, then $A$ is strongly $J$-clean for any $s\in C(R)\cap
J(R)$ because $A\in Nil\big(M_2(R;s)\big)$ implies that $A^n=0\in
J\big(M_2(R;s)\big)$ for some $n\in \Bbb N$.

Recall that an element $\left[%
\begin{array}{cc}
 a & 0\\
  0 & b \\
\end{array}%
\right]$ is called \textit{a diagonal matrix of} $M_2(R;s)$.

\begin{lem} \label{idempotent} Let $R$ be a local ring with $s\in C(R)$ and let $E$ be a non-trivial idempotent of $M_2(R;s)$.
Then we have the following.

\begin{itemize}
\item[(1)] If $s\in U(R)$, then $E\sim \left[%
\begin{array}{cc}
 1 & 0\\
  0 & 0 \\
\end{array}%
\right]$.
\item[(2)] If $s\in J(R)$, then either $E\sim \left[%
\begin{array}{cc}
 1 & 0\\
  0 & 0 \\
\end{array}%
\right]$ or $E\sim \left[%
\begin{array}{cc}
 0 & 0\\
  0 & 1 \\
\end{array}%
\right]$.
\end{itemize}
\end{lem}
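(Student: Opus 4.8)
The plan is to reduce both parts to the behaviour of idempotents modulo $J\big(M_2(R;s)\big)$, using the following standard fact, which I would state and dispatch in two lines at the outset: \emph{if $e,f$ are idempotents of a ring $S$ with $e-f\in J(S)$, then $e$ and $f$ are similar in $S$.} Indeed, put $u=ef+(1-e)(1-f)=1-e-f+2ef$; then $eu=ef=uf$, while $u-1=-e(e-f)-(f-e)f\in J(S)$, so $u\in U(S)$ and $f=u^{-1}eu$. I would also use the trivial remarks that an idempotent lying in $J(S)$ is $0$, and an idempotent $g$ with $1-g\in J(S)$ is $1$.

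For part $(2)$, where $s\in C(R)\cap J(R)$: by Lemma~\ref{lem2}, $J\big(M_2(R;s)\big)=\left[\begin{smallmatrix}J(R)&R\\R&J(R)\end{smallmatrix}\right]$, and since the off-diagonal entries and $s^2\in J(R)$ are wiped out in the quotient, a direct inspection of the multiplication rule shows $M_2(R;s)/J\big(M_2(R;s)\big)\cong \big(R/J(R)\big)\times\big(R/J(R)\big)$ via $\overline{\left[\begin{smallmatrix}a&b\\c&d\end{smallmatrix}\right]}\mapsto(\bar a,\bar d)$, a direct product of two division rings. The only idempotents of a product of two division rings are $(0,0),(1,0),(0,1),(1,1)$; the images $(0,0)$ and $(1,1)$ are excluded because $E$ is non-trivial (they would force $E=0$ and $E=1$). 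Hence $E-\left[\begin{smallmatrix}1&0\\0&0\end{smallmatrix}\right]\in J\big(M_2(R;s)\big)$ or $E-\left[\begin{smallmatrix}0&0\\0&1\end{smallmatrix}\right]\in J\big(M_2(R;s)\big)$, and the opening fact gives the two stated alternatives.

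For part $(1)$, where $s\in U(R)$: by \cite[Proposition 4]{TZ2} there is a ring isomorphism $M_2(R;s)\cong M_2(R)$ (the natural one, which fixes diagonal matrices); since an isomorphism preserves similarity and non-triviality of idempotents, it suffices to show a non-trivial idempotent $E$ of $M_2(R)$ is similar to $H:=\left[\begin{smallmatrix}1&0\\0&0\end{smallmatrix}\right]$. Here $M_2(R)/J\big(M_2(R)\big)=M_2\big(R/J(R)\big)=M_2(D)$ for the division ring $D=R/J(R)$, and, as above, $\bar E$ is a non-trivial idempotent of $M_2(D)$, hence by elementary linear algebra over a division ring it is conjugate in $M_2(D)$ to the image of $H$; lifting the conjugating unit to some $g\in M_2(R)$ with invertible image (so $g\in U\big(M_2(R)\big)$), the matrix $gEg^{-1}$ is an idempotent congruent to $H$ modulo $J\big(M_2(R)\big)$, so the opening fact gives $gEg^{-1}\sim H$ and therefore $E\sim H$.

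I do not expect a serious obstacle: the only points needing genuine care are the explicit identification of $M_2(R;s)/J\big(M_2(R;s)\big)$ when $s\in J(R)$ and, in part $(1)$, transporting the conjugacy from $M_2(D)$ up to $M_2(R)$ — and both are taken care of by the congruence-mod-$J$ similarity statement established at the beginning, which is really the heart of the argument.
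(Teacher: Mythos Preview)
Your argument is correct and takes a genuinely different route from the paper's. The paper proceeds by direct computation: writing $E=\left[\begin{smallmatrix}a&b\\c&d\end{smallmatrix}\right]$, it uses the idempotent equations to show that one of $a,d$ is a unit, then performs an explicit elementary reduction to exhibit $E$ as \emph{equivalent} to a diagonal matrix, and invokes \cite[Corollary~26]{TZ2} to upgrade equivalence to similarity; the case split on $s$ is finished by an explicit conjugation (when $s\in U(R)$) showing $\left[\begin{smallmatrix}0&0\\0&1\end{smallmatrix}\right]\sim\left[\begin{smallmatrix}1&0\\0&0\end{smallmatrix}\right]$ via $\left[\begin{smallmatrix}0&1\\1&0\end{smallmatrix}\right]$. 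Your approach is more structural: you pass to the quotient modulo the Jacobson radical, classify idempotents there (trivially, since the quotient is $D\times D$ or $M_2(D)$ for the division ring $D=R/J(R)$), and then lift the conjugacy via the standard lemma that idempotents congruent modulo $J$ are similar. The paper's route is hands-on and stays inside the formal-matrix framework but leans on a nontrivial external result (\cite[Corollary~26]{TZ2} on equivalence versus similarity); yours rests on a one-line general principle that works in any ring, at the cost of invoking the isomorphism $M_2(R;s)\cong M_2(R)$ from \cite[Proposition~4]{TZ2} for part~(1). Note, incidentally, that your reduction in part~(1) does not actually need the isomorphism to fix diagonal matrices: once you know every non-trivial idempotent of $M_2(R)$ is similar to $H$, the images of both $E$ and $\left[\begin{smallmatrix}1&0\\0&0\end{smallmatrix}\right]$ under any isomorphism are similar to $H$, hence to each other.
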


\begin{proof} Let $E=\left[%
\begin{array}{cc}
 a & b\\
  c & d \\
\end{array}%
\right]$ where $a,b,c,d\in R$. Since $E^2=E$, we have
$$a^2+s^2bc=a, ~s^2cb+d^2=d, ~ab+bd=b, ~ca+dc=c.$$

\noindent If $a,d\in J(R)$, then $b,c\in J(R)$ and so $E\in
J\big(M_2(R;s)\big)$. Hence $E=0$, a
contradiction. Since $R$ is local, we have $a\in U(R)$ or $d\in U(R)$. If $d\in U(R)$, then $\left[%
\begin{array}{cc}
 1 & -bd^{-1}\\
  0 & 1 \\
\end{array}%
\right]\left[%
\begin{array}{cc}
 a & b\\
  c & d \\
\end{array}%
\right]\left[%
\begin{array}{cc}
 1 & 0\\
  -d^{-1}c & d^{-1} \\
\end{array}%
\right]=\left[%
\begin{array}{cc}
 a-s^2bd^{-1}c & 0\\
  0 & 1 \\
\end{array}%
\right]$ and so $E$ is equivalent to a diagonal matrix. If $a\in
U(R)$, then we similarly show that $E$ is equivalent to a diagonal
matrix. According to \cite[Corollary 26]{TZ2},
there exists $P\in U\big(M_2(R;s)\big)$ such that $PEP^{-1}=\left[%
\begin{array}{cc}
 f & 0\\
  0 & g \\
\end{array}%
\right]$, where $f,g\in R$ are idempotents. Since $R$ is local and
$E\neq 0$, we see that either $f=1$ and $g=0$ or $f=0$ and $g=1$.
If $f=1$ and $g=0$, then $E\sim \left[%
\begin{array}{cc}
 1 & 0\\
  0 & 0 \\
\end{array}%
\right]$. Let $f=0$ and $g=1$. If $s\in U(R)$, then $\left[%
\begin{array}{cc}
 0 & 1\\
  1 & 0 \\
\end{array}%
\right]\left[%
\begin{array}{cc}
 0 & 0\\
  0 & 1 \\
\end{array}%
\right]\left[%
\begin{array}{cc}
 0 & s^{-2}\\
  s^{-2} & 0 \\
\end{array}%
\right]=\left[%
\begin{array}{cc}
 1 & 0\\
  0 & 0 \\
\end{array}%
\right]$ and so $E\sim \left[%
\begin{array}{cc}
 1 & 0\\
  0 & 0 \\
\end{array}%
\right]$. If $s\in J(R)$ and $E\sim \left[%
\begin{array}{cc}
 1 & 0\\
  0 & 0 \\
\end{array}%
\right]$, then it is easy to check that $s\in U(R)$, a
contradiction. Hence, if $s\in J(R)$, then either $E\sim \left[%
\begin{array}{cc}
 1 & 0\\
  0 & 0 \\
\end{array}%
\right]$ or $E\sim \left[%
\begin{array}{cc}
 0 & 0\\
  0 & 1 \\
\end{array}%
\right]$.
\end{proof}

\begin{lem} \label{lem7} Let $R$ be a local ring and $s\in C(R)$.
Then $A\in M_2(R;s)$ is strongly $J$-clean if and only if one of
the following holds:

\begin{itemize}
\item[(1)] $A\in J\big(M_2(R;s)\big)$, or
\item[(2)] $I_2-A\in J\big(M_2(R;s)\big)$, or
\item[(3)] $A\sim \left[%
\begin{array}{cc}
 v & 0\\
  0 & w \\
\end{array}%
\right]$, where $v\in 1+J(R)$, $w\in J(R)$ and $s\in U(R)$, or
\item[(4)] either $A\sim \left[%
\begin{array}{cc}
 v & 0\\
  0 & w \\
\end{array}%
\right]$ or $A\sim \left[%
\begin{array}{cc}
 w & 0\\
  0 & v \\
\end{array}%
\right]$, where $v\in 1+J(R)$, $w\in J(R)$ and $s\in J(R)$.
\end{itemize}
\end{lem}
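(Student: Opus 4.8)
The plan is to prove the equivalence by reducing an arbitrary strongly $J$-clean decomposition to a conjugacy-normal form via Lemma~\ref{lem6} and Lemma~\ref{idempotent}, and conversely to verify that each listed form is strongly $J$-clean. For the ``only if'' direction, suppose $A$ is strongly $J$-clean, so there is $E^2=E\in M_2(R;s)$ with $A-E\in J\big(M_2(R;s)\big)$ and $AE=EA$. The idempotent $E$ is either $0$, $I_2$, or non-trivial. If $E=0$ then $A\in J\big(M_2(R;s)\big)$, giving (1); if $E=I_2$ then $I_2-A\in J\big(M_2(R;s)\big)$, giving (2). So assume $E$ is non-trivial. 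By Lemma~\ref{idempotent} there is $P\in U\big(M_2(R;s)\big)$ with $PEP^{-1}$ equal to $\mathrm{diag}(1,0)$ (always possible when $s\in U(R)$) or one of $\mathrm{diag}(1,0)$, $\mathrm{diag}(0,1)$ (when $s\in J(R)$); by Lemma~\ref{lem6}, $PAP^{-1}$ is still strongly $J$-clean, with the commuting idempotent now being this diagonal matrix.

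The next step is to exploit the commutation $AE=EA$ after conjugation. Writing $B=PAP^{-1}=\left[\begin{array}{cc} v & x\\ y & w\end{array}\right]$ and, say, $PEP^{-1}=\left[\begin{array}{cc}1&0\\0&0\end{array}\right]$, the relation $B\,(PEP^{-1})=(PEP^{-1})\,B$ forces $x=0$ and $y=0$, so $B$ is diagonal: $B=\mathrm{diag}(v,w)$. From $B-PEP^{-1}\in J\big(M_2(R;s)\big)=\left[\begin{array}{cc}J(R)&J_s(R)\\ J_s(R)&J(R)\end{array}\right]$ (Theorem~\ref{teoor1}) we read off $v-1\in J(R)$ and $w\in J(R)$, i.e. $v\in 1+J(R)$, $w\in J(R)$. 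This yields (3) when $s\in U(R)$ and the first alternative of (4) when $s\in J(R)$; the other idempotent $\mathrm{diag}(0,1)$ gives symmetrically $A\sim\mathrm{diag}(w,v)$ with the same constraints on $v,w$, which is the second alternative of (4). One must also be slightly careful to record that cases (1) and (2) can genuinely coincide with a degenerate reading of the diagonal forms, but listing them as a disjunction ``one of the following holds'' makes this harmless.

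For the ``if'' direction one checks each case. Case (1): $A\in J\big(M_2(R;s)\big)$ is strongly $J$-clean with idempotent $E=0$ (Example~\ref{ex1}(1)). Case (2): then $I_2-A\in J\big(M_2(R;s)\big)$ is strongly $J$-clean by case (1), so $A$ is strongly $J$-clean by Example~\ref{ex1}(3). Cases (3) and (4): by Lemma~\ref{lem6} it suffices to show $\mathrm{diag}(v,w)$ (or $\mathrm{diag}(w,v)$) is strongly $J$-clean; take $E=\mathrm{diag}(1,0)$ (resp. $\mathrm{diag}(0,1)$), which is idempotent in $M_2(R;s)$ for every $s$, commutes with any diagonal matrix, and satisfies $\mathrm{diag}(v,w)-E=\mathrm{diag}(v-1,w)$ with $v-1\in J(R)$, $w\in J(R)$, hence lies in $J\big(M_2(R;s)\big)$ by Theorem~\ref{teoor1} (the off-diagonal entries are $0\in J_s(R)$). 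So the decomposition is valid regardless of whether $s\in U(R)$ or $s\in J(R)$.

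The main obstacle I anticipate is the bookkeeping in the ``only if'' direction: extracting from $AE=EA$ that $PAP^{-1}$ is actually diagonal (not merely block upper/lower triangular) in the case $s\in J(R)$, where the multiplication in $M_2(R;s)$ has the factor $s^2$ and one cannot invert $s$ — one has to check the commutator entries by hand using the twisted product and see that the $s^2$-terms do not obstruct the conclusion $x=y=0$. A secondary point needing care is confirming that in Lemma~\ref{idempotent}(2) the conjugating unit $P$ can be chosen so that the commuting idempotent in the strongly $J$-clean decomposition is exactly the one to which $E$ is shown similar; since Lemma~\ref{lem6} transports the \emph{entire} decomposition (idempotent and radical part together) under a fixed $P$, this is automatic once we conjugate by the $P$ from Lemma~\ref{idempotent}, but it is worth stating explicitly to avoid a gap.
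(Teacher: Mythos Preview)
Your proposal is correct and follows essentially the same route as the paper's proof: in the ``only if'' direction you conjugate the idempotent to $\mathrm{diag}(1,0)$ or $\mathrm{diag}(0,1)$ via Lemma~\ref{idempotent}, transport the decomposition by Lemma~\ref{lem6}, and use the commutation to force the conjugate of $A$ (the paper equivalently uses the conjugate of $W=A-E$) to be diagonal with the required entries read off from Theorem~\ref{teoor1}; the ``if'' direction is the same explicit verification. Your anticipated obstacle about the $s^2$-twist is a non-issue, since multiplying by $\mathrm{diag}(1,0)$ on either side involves no $s^2$-terms in the off-diagonal positions, and your remark about transporting the full decomposition under a single $P$ is exactly how the paper proceeds.
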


\begin{proof} ``$\Leftarrow:$" If either $A\in J\big(M_2(R;s)\big)$ or $I_2-A\in
J\big(M_2(R;s)\big)$, then $A$ is strongly $J$-clean. If $A\sim \left[%
\begin{array}{cc}
 v & 0\\
  0 & w \\
\end{array}%
\right]$ where $v\in 1+J(R)$, $w\in J(R)$ and $s\in U(R)$, then $\left[%
\begin{array}{cc}
 v & 0\\
  0 & w \\
\end{array}%
\right]=\left[%
\begin{array}{cc}
 1 & 0\\
  0 & 0 \\
\end{array}%
\right]+\left[%
\begin{array}{cc}
 v-1 & 0\\
  0 & w \\
\end{array}%
\right]$ is strongly $J$-clean in $M_2(R;s)$. By Lemma~\ref{lem6},
$A$ is strongly $J$-clean. Similarly, if either $A\sim \left[%
\begin{array}{cc}
 v & 0\\
  0 & w \\
\end{array}%
\right]$ or $A\sim \left[%
\begin{array}{cc}
 w & 0\\
  0 & v \\
\end{array}%
\right]$, where $v\in 1+J(R)$, $w\in J(R)$ and $s\in J(R)$, then
$A$ is strongly $J$-clean.

``$\Rightarrow:$" Assume that $A$ is strongly $J$-clean and both
$A, I_2-A\notin J\big(M_2(R;s)\big)$. Then there exists $E^2=E\in
M_2(R;s)$ such that $A-E=W\in J\big(M_2(R;s)\big)$ and $EW=WE$.
Since both $A, I_2-A\notin J\big(M_2(R;s)\big)$, $E$ is a
non-trivial
idempotent of $M_2(R;s)$. If $s\in U(R)$, then $E\sim \left[%
\begin{array}{cc}
 1 & 0\\
  0 & 0 \\
\end{array}%
\right]$ by Lemma~\ref{idempotent}. That is, there exists $P\in
U\big(M_2(R;s)\big)$ such that $PEP^{-1}=\left[%
\begin{array}{cc}
 1 & 0\\
  0 & 0 \\
\end{array}%
\right]$. According to Lemma~\ref{lem6},
$PAP^{-1}=PEP^{-1}+PWP^{-1}$ is strongly $J$-clean in $M_2(R;s)$.
Let $V=[v_{ij}]=PWP^{-1}$. Since $V\left[%
\begin{array}{cc}
 1 & 0\\
  0 & 0 \\
\end{array}%
\right]=\left[%
\begin{array}{cc}
 1 & 0\\
  0 & 0 \\
\end{array}%
\right]V$, we have $v_{12}=v_{21}=0$ and so $v_{11}, v_{22}\in
J(R)$.
Hence $A\sim \left[%
\begin{array}{cc}
 1+v_{11} & 0\\
  0 &  v_{22}\\
\end{array}%
\right]$. Similarly, if $s\in J(R)$, then either $A\sim \left[%
\begin{array}{cc}
 1+v_{11} & 0\\
  0 &  v_{22}\\
\end{array}%
\right]$ or $A\sim \left[%
\begin{array}{cc}
 v_{11} & 0\\
  0 &  1+v_{22}\\
\end{array}%
\right]$.
\end{proof}

\begin{lem} \label{lem1.1} Let $R$ be a local ring and $s\in C(R)$.
Then $A\in M_2(R;s)$ is strongly nil clean if and only if one of
the following holds:

\begin{itemize}
\item[(1)] $A\in Nil\big(M_2(R;s)\big)$,
\item[(2)] $I_2-A\in Nil\big(M_2(R;s)\big)$,
\item[(3)] $A\sim \left[%
\begin{array}{cc}
 v & 0\\
  0 & w \\
\end{array}%
\right]$, where $v\in 1+Nil(R)$, $w\in Nil(R)$ and $s\in U(R)$,
\item[(4)] either $A\sim \left[%
\begin{array}{cc}
 v & 0\\
  0 & w \\
\end{array}%
\right]$ or $A\sim \left[%
\begin{array}{cc}
 w & 0\\
  0 & v \\
\end{array}%
\right]$, where $v\in 1+Nil(R)$, $w\in Nil(R)$ and $s\in J(R)$.
\end{itemize}
\end{lem}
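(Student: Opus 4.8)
The plan is to follow the proof of Lemma~\ref{lem7} almost line for line, with $J(R)$ and $J\big(M_2(R;s)\big)$ replaced everywhere by $Nil(R)$ and $Nil\big(M_2(R;s)\big)$. Two preliminary facts make this transcription possible. First, for $a,b\in R$ one has $\mathrm{diag}(a,b)^{n}=\mathrm{diag}(a^{n},b^{n})$ in $M_2(R;s)$ for every $n$, regardless of $s$; consequently a diagonal matrix lies in $Nil\big(M_2(R;s)\big)$ if and only if both of its diagonal entries lie in $Nil(R)$. Second, conjugation by a unit of $M_2(R;s)$ preserves both nilpotency and commutation, so the proof of Lemma~\ref{lem6} applies verbatim to give the analogous statement for strong nil cleanness: $A$ is strongly nil clean in $M_2(R;s)$ if and only if $PAP^{-1}$ is, for any $P\in U\big(M_2(R;s)\big)$.

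For the ``if'' direction I would treat the four cases separately. Cases (1) and (2) are immediate, since $A=0+A$ and $A=I_2+(A-I_2)$ are strongly nil clean decompositions. In case (3), the identity $\mathrm{diag}(v,w)=\mathrm{diag}(1,0)+\mathrm{diag}(v-1,w)$ is a strongly nil clean decomposition of $\mathrm{diag}(v,w)$: the idempotent $\mathrm{diag}(1,0)$ commutes with $\mathrm{diag}(v,w)$, and $\mathrm{diag}(v-1,w)\in Nil\big(M_2(R;s)\big)$ by the first preliminary fact because $v-1,w\in Nil(R)$; the nil-clean version of Lemma~\ref{lem6} then transfers this to $A$. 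Case (4) is handled the same way, using instead the decomposition $\mathrm{diag}(w,v)=\mathrm{diag}(0,1)+\mathrm{diag}(w,v-1)$ in the second alternative.

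For the ``only if'' direction I would reproduce the skeleton of Lemma~\ref{lem7}: assume $A$ strongly nil clean with $A,\,I_2-A\notin Nil\big(M_2(R;s)\big)$, and choose $E^2=E$ with $A-E=W\in Nil\big(M_2(R;s)\big)$ and $EW=WE$. Here $E=0$ would force $A=W$ nilpotent and $E=I_2$ would force $I_2-A=-W$ nilpotent, so $E$ is a non-trivial idempotent and Lemma~\ref{idempotent} applies. If $s\in U(R)$, conjugate $E$ to $\mathrm{diag}(1,0)$; then $PAP^{-1}=\mathrm{diag}(1,0)+V$ with $V=PWP^{-1}\in Nil\big(M_2(R;s)\big)$, and the relation $V\,\mathrm{diag}(1,0)=\mathrm{diag}(1,0)\,V$ forces the off-diagonal entries of $V$ to vanish; since $V$ is then a nilpotent diagonal matrix, its entries $v_{11},v_{22}$ lie in $Nil(R)$, so $A\sim\mathrm{diag}(1+v_{11},v_{22})$, which is case (3). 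If $s\in J(R)$, Lemma~\ref{idempotent} allows in addition $E\sim\mathrm{diag}(0,1)$, and the same computation in that subcase gives $A\sim\mathrm{diag}(v_{11},1+v_{22})$ with $v_{11}\in Nil(R)$ and $1+v_{22}\in 1+Nil(R)$, the second alternative of case (4).

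I do not expect any genuine obstacle here: the entire argument is a transcription of the proof of Lemma~\ref{lem7}. The only point at which the nil-clean statement differs substantively from the $J$-clean one is the descent of nilpotency from the $2\times2$ matrix $V$ to its diagonal entries, and that is precisely what the diagonal-power identity from the first paragraph supplies — crucially, that identity is insensitive to the twisting parameter $s$, so the reduction works uniformly in both the $s\in U(R)$ and the $s\in J(R)$ cases.
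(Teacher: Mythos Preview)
Your proposal is correct and is precisely what the paper intends: its own proof consists of the single line ``It is similar to the proof of Lemma~\ref{lem7}.'' Your explicit verification that diagonal powers in $M_2(R;s)$ are computed entrywise (so nilpotency of a diagonal $V$ descends to its entries) and that the nil-clean analogue of Lemma~\ref{lem6} holds supplies exactly the details needed to make that transcription go through.
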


\begin{proof} It is similar to the proof of Lemma~\ref{lem7}.
\end{proof}

We now extend \cite[Corollary 5.4]{C1} to $M_2(R;s)$.

\begin{cor}\label{cor2} Let $R$ be a commutative local ring with $s\in U(R)$. Then $M_2(R;s)$ is not strongly $J$-clean.
\end{cor}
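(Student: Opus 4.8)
The plan is to exhibit a single element of $M_2(R;s)$ that fails to be strongly $J$-clean and then invoke Lemma~\ref{lem7} to derive a contradiction. Since $s\in U(R)$, we have $M_2(R;s)\cong M_2(R)$ by \cite[Proposition 4]{TZ2}, so it suffices to find a bad element in $M_2(R)$; in fact the natural candidate is $A=\left[\begin{array}{cc} 0 & 1 \\ -1 & 0 \end{array}\right]$ (or, accounting for the twist, $A=\left[\begin{array}{cc} 0 & 1 \\ -s^{-2} & 0 \end{array}\right]$ inside $M_2(R;s)$), whose characteristic polynomial is $t^2+1$. The idea is that this $A$ is a unit whose image is ``too spread out'' to be conjugate to a diagonal matrix of the shape forced by Lemma~\ref{lem7}.

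First I would suppose, for contradiction, that $A$ is strongly $J$-clean, and run through the four cases of Lemma~\ref{lem7}. Case (1) is impossible: $A\in J\big(M_2(R;s)\big)$ would force the diagonal entries $0,0$ into $J(R)$ (fine) but the off-diagonal entries lie in $J_s(R)=R$ (since $s\in U(R)$ this is fine too) — so actually I must check invertibility instead. Better: $A\in U\big(M_2(R;s)\big)$ since it has a two-sided inverse, so by Example~\ref{ex1}(2)/Lemma~\ref{lem4}, $A$ strongly $J$-clean would force $A-I_2\in J\big(M_2(R;s)\big)$, hence (via Theorem~\ref{teoor1} and $s\in U(R)$, so $J_s(R)=J(R)$) every entry of $A-I_2$ lies in $J(R)$; but the $(1,1)$-entry is $0-1=-1\notin J(R)$, a contradiction. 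This rules out cases (1) and (2) simultaneously (since $A$ is a unit, $I_2-A$ would also need to satisfy the radical condition, impossible for the same reason). So only cases (3) and (4) survive, and since $s\in U(R)$ only case (3) applies: $A\sim\left[\begin{array}{cc} v & 0 \\ 0 & w \end{array}\right]$ with $v\in 1+J(R)$, $w\in J(R)$.

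The crux is then to show this similarity is impossible. Passing to $M_2(R)$ via the isomorphism (which preserves similarity), $A$ becomes a matrix similar to $\left[\begin{array}{cc} 0 & 1 \\ -1 & 0 \end{array}\right]$, hence with trace $0$ and determinant $1$; on the other hand $\left[\begin{array}{cc} v & 0 \\ 0 & w \end{array}\right]$ has trace $v+w$ and determinant $vw$. Similar matrices over a commutative ring have equal trace and determinant, so $v+w=0$ and $vw=1$. From $v\in 1+J(R)$ and $w=-v$ we get $w\in -1+J(R)\subseteq U(R)$, contradicting $w\in J(R)$ (as $R$ is local, $J(R)\cap U(R)=\emptyset$). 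Alternatively, $vw=1$ already forces $w\in U(R)$, again contradicting $w\in J(R)$. Either way we reach a contradiction, so $A$ is not strongly $J$-clean and therefore $M_2(R;s)$ is not strongly $J$-clean.

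The main obstacle I anticipate is purely bookkeeping: making sure the twist by $s$ is handled correctly when transporting the counterexample between $M_2(R;s)$ and $M_2(R)$, and confirming that the chosen $A$ really is a unit in $M_2(R;s)$ (its inverse involves $s^{-2}$, which exists precisely because $s\in U(R)$). Once commutativity of $R$ is used to extract the trace/determinant invariants, the arithmetic contradiction $w\in J(R)\cap U(R)$ is immediate; no case analysis beyond Lemma~\ref{lem7} is needed.
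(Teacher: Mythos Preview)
Your argument is correct and matches the paper's: exhibit a unit $A\in M_2(R;s)$ (the paper takes $\left[\begin{smallmatrix}1&1\\1&0\end{smallmatrix}\right]$ rather than your $\left[\begin{smallmatrix}0&1\\-s^{-2}&0\end{smallmatrix}\right]$) and invoke Lemma~\ref{lem4}, which forces $A-I_2\in J\big(M_2(R;s)\big)$ and hence $-1\in J(R)$, a contradiction. Note that this Lemma~\ref{lem4} step is already a \emph{complete} contradiction to $A$ being strongly $J$-clean, not merely to cases (1)--(2) of Lemma~\ref{lem7}; your subsequent trace/determinant elimination of case (3) is therefore correct but redundant.
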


\begin{proof} Let $A=\left[%
\begin{array}{cc}
 1 & 1\\
 1 & 0 \\
\end{array}%
\right]\in M_2(R;s)$. Clearly, $A\notin J\big(M_2(R;s)\big)$ and
$I_2-A\notin J\big(M_2(R;s)\big)$. It is easy to check that $A\in
U\big(M_2(R;s)\big)$ and $I_2-A\in U\big(M_2(R;s)\big)$. Assume
that $A$ is strongly $J$-clean in $M_2(R;s)$. By Lemma~\ref{lem4},
$A-I_2\in J\big(M_2(R;s)\big)$, a contradiction. Hence $M_2(R;s)$
is not strongly $J$-clean.
\end{proof}

\begin{lem} \label{lem8} Let $R$ be a local ring and $s\in C(R)$.
Then $A\in M_2(R;s)$ is strongly clean if and only if one of the
following holds:

\begin{itemize}
\item[(1)] $A\in U\big(M_2(R;s)\big)$,
\item[(2)] $I_2-A\in U\big(M_2(R;s)\big)$,
\item[(3)] $A\sim \left[%
\begin{array}{cc}
 w & 0\\
  0 & v \\
\end{array}%
\right]$, where $v\in 1+J(R)$, $w\in J(R)$ and $s\in U(R)$,
\item[(4)] either $A\sim \left[%
\begin{array}{cc}
 v & 0\\
  0 & w \\
\end{array}%
\right]$ or $A\sim \left[%
\begin{array}{cc}
 w & 0\\
  0 & v \\
\end{array}%
\right]$, where $v\in 1+J(R)$, $w\in J(R)$ and $s\in J(R)$.
\end{itemize}
\end{lem}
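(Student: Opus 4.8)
The plan is to mirror the proof of Lemma~\ref{lem7} almost verbatim, systematically replacing ``belongs to $J\big(M_2(R;s)\big)$'' by ``is a unit of $M_2(R;s)$''. Two routine facts are needed first. I would record the strongly clean analogue of Lemma~\ref{lem6} --- that $A$ is strongly clean if and only if $PAP^{-1}$ is, for any $P\in U\big(M_2(R;s)\big)$ --- which is proved by exactly the argument of Lemma~\ref{lem6}, since conjugation by a unit preserves idempotency, commutation, and invertibility. I would also note that a diagonal matrix $\mathrm{diag}(a,b)\in M_2(R;s)$ is invertible precisely when $a,b\in U(R)$: this is Lemma~\ref{lem2} when $s\in J(R)$, and follows from the isomorphism $M_2(R;s)\cong M_2(R)$ (which fixes diagonal matrices) when $s\in U(R)$.

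For the ``if'' direction I would just exhibit an idempotent in each case. Cases (1) and (2): take $E=0$ or $E=I_2$, which commute with $A$ and make $A-E$ equal to $A$ or $-(I_2-A)$, a unit. For (3) and (4), by the conjugation fact it is enough to handle the displayed diagonal matrices: $\mathrm{diag}(w,v)-\mathrm{diag}(1,0)=\mathrm{diag}(w-1,v)$ has both entries in $U(R)$ (here $w-1\in U(R)$ and $v\in 1+J(R)$), hence is a unit; and $\mathrm{diag}(v,w)-\mathrm{diag}(0,1)=\mathrm{diag}(v,w-1)$ is likewise a unit. Each chosen idempotent is diagonal, hence commutes with the diagonal matrix, so strong cleanness follows.

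For the ``only if'' direction I would assume $A$ strongly clean with $A,\,I_2-A\notin U\big(M_2(R;s)\big)$, fix $E^2=E$ commuting with $A$ and with $A-E=U\in U\big(M_2(R;s)\big)$, and first observe that $E$ must be nontrivial: $E=0$ would make $A=U$ a unit and $E=I_2$ would make $I_2-A=-U$ a unit. By Lemma~\ref{idempotent} together with the conjugation fact I may assume $E=\mathrm{diag}(1,0)$, or (only when $s\in J(R)$) $E=\mathrm{diag}(0,1)$. Taking $E=\mathrm{diag}(1,0)$, the relation $EU=UE$ kills the off-diagonal entries of $U$, so $U=\mathrm{diag}(u_{11},u_{22})$, and then invertibility of $U$ forces $u_{11},u_{22}\in U(R)$; thus $A=\mathrm{diag}(1+u_{11},u_{22})$ with $u_{22}\in U(R)$.

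The one genuinely delicate step --- and the place where the precise shape ``$v\in 1+J(R)$'' of the conclusion gets forced --- is the final case analysis on $1+u_{11}$ and $u_{22}$, using that $R$ is local so every element is a unit or lies in $J(R)$. If $1+u_{11}\in U(R)$ then $A$ is a unit, contrary to hypothesis, so $w:=1+u_{11}\in J(R)$. Then either $u_{22}\in 1+J(R)$, giving $A\sim\mathrm{diag}(w,u_{22})$, which is case (3) when $s\in U(R)$ and one of the two alternatives of case (4) when $s\in J(R)$; or $u_{22}-1\in U(R)$, in which case $I_2-A=\mathrm{diag}(1-w,1-u_{22})$ has both diagonal entries invertible, hence $I_2-A\in U\big(M_2(R;s)\big)$, contradicting the hypothesis. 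The subcase $E=\mathrm{diag}(0,1)$ (so $s\in J(R)$) is completely symmetric and yields the other alternative $A\sim\mathrm{diag}(v,w)$ of case (4). This exhausts all cases.
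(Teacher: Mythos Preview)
Your proof is correct and follows essentially the same route as the paper's: both directions proceed exactly as in Lemma~\ref{lem7}, replacing the Jacobson radical by the unit group, invoking Lemma~\ref{idempotent} to diagonalize $E$, using $EU=UE$ to force $U$ diagonal with unit entries, and then using locality together with $A,\,I_2-A\notin U\big(M_2(R;s)\big)$ to pin down which diagonal entry lies in $J(R)$ and which in $1+J(R)$. Your final case analysis is spelled out in slightly more detail than the paper's (which simply asserts ``Since $R$ is local, we have $v=v_{22}\in 1+J(R)$ and $w=1+v_{11}\in J(R)$''), but the argument is the same.
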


\begin{proof} ``$\Leftarrow:$" If either $A\in U\big(M_2(R;s)\big)$ or $I_2-A\in
U\big(M_2(R;s)\big)$, then $A$ is strongly clean. If $A\sim \left[%
\begin{array}{cc}
 v & 0\\
  0 & w \\
\end{array}%
\right]$ where $v\in 1+J(R)$, $w\in J(R)$ and $s\in U(R)$, then $\left[%
\begin{array}{cc}
 v & 0\\
  0 & w \\
\end{array}%
\right]=\left[%
\begin{array}{cc}
 0 & 0\\
  0 & 1 \\
\end{array}%
\right]+\left[%
\begin{array}{cc}
 v & 0\\
  0 & w-1 \\
\end{array}%
\right]$ is strongly clean in $M_2(R;s)$. So
$A$ is strongly clean. Similarly, if either $A\sim \left[%
\begin{array}{cc}
 v & 0\\
  0 & w \\
\end{array}%
\right]$ or $A\sim \left[%
\begin{array}{cc}
 w & 0\\
  0 & v \\
\end{array}%
\right]$, where $v\in 1+J(R)$, $w\in J(R)$ and $s\in J(R)$, then
$A$ is strongly clean.

``$\Rightarrow:$" Assume that $A$ is strongly clean and both $A,
I_2-A\notin U\big(M_2(R;s)\big)$. As in the proof of
Lemma~\ref{lem7}, if $s\in U(R)$, then we show that $A\sim B=\left[%
\begin{array}{cc}
 1+v_{11} & 0\\
  0 &  v_{22}\\
\end{array}%
\right]$ where $v_{11}, v_{22}\in U(R)$. Note that $A\in
U\big(M_2(R;s)\big)$ if and only if $PAP^{-1}\in
U\big(M_2(R;s)\big)$. This gives that $B\notin
U\big(M_2(R;s)\big)$ and $I_2-B\notin U\big(M_2(R;s)\big)$. Since
$R$ is local, we have $v=v_{22}\in 1+J(R)$
and $w=1+v_{11}\in J(R)$. Similarly, if $s\in J(R)$, then either $A\sim \left[%
\begin{array}{cc}
 v & 0\\
  0 & w \\
\end{array}%
\right]$ or $A\sim \left[%
\begin{array}{cc}
 w & 0\\
  0 & v \\
\end{array}%
\right]$, where $v\in 1+J(R)$, $w\in J(R)$.
\end{proof}

Hence the following corollary is immediate.

\begin{cor}\label{faydali} Let $R$ be a local ring and $s\in C(R)$.
Then $A\in M_2(R;s)$ is strongly clean if and only if one of the
following holds:

\begin{itemize}
\item[(1)] $A\in U\big(M_2(R;s)\big)$,
\item[(2)] $I_2-A\in U\big(M_2(R;s)\big)$,
\item[(3)] $A\sim \left[%
\begin{array}{cc}
 a & 0\\
  0 & b \\
\end{array}%
\right]$, where $a,b\in R$.
\end{itemize}
\end{cor}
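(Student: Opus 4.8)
The plan is to deduce the corollary from Lemma~\ref{lem8} by merging its four alternatives into three. For the forward direction, suppose $A\in M_2(R;s)$ is strongly clean. Lemma~\ref{lem8} gives one of its four cases; cases (1) and (2) there are verbatim conditions (1) and (2) here, while cases (3) and (4) of Lemma~\ref{lem8} both assert precisely that $A$ is similar to a diagonal matrix $\left[\begin{array}{cc} a & 0\\ 0 & b\end{array}\right]$ with $a,b\in R$, which is condition (3) here. So the forward implication is immediate.

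For the converse, if $A\in U\big(M_2(R;s)\big)$ then $A=0+A$ exhibits $A$ as strongly clean, and if $I_2-A\in U\big(M_2(R;s)\big)$ then $A=I_2+(A-I_2)$ does; so it remains only to treat condition (3). As in the proof of Lemma~\ref{lem6} (with ``$\in U$'' in place of ``$\in J$''), strong cleanness is invariant under conjugation by a unit of $M_2(R;s)$, so it suffices to show that $B=\left[\begin{array}{cc} a & 0\\ 0 & b\end{array}\right]$ is strongly clean. Here I would use that $R$ is local to split into four cases according to whether each of $a,b$ lies in $U(R)$ or in $J(R)$, and in each case exhibit an idempotent $E\in\big\{0,\,I_2,\,\left[\begin{array}{cc} 1 & 0\\ 0 & 0\end{array}\right],\,\left[\begin{array}{cc} 0 & 0\\ 0 & 1\end{array}\right]\big\}$ --- each of which is idempotent in $M_2(R;s)$ and commutes with every diagonal matrix --- for which $B-E\in U\big(M_2(R;s)\big)$. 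Concretely: if $a,b\in U(R)$ take $E=0$; if $a,b\in J(R)$ take $E=I_2$, using Theorem~\ref{teoor1} together with $0\in J_s(R)$ to see $B\in J\big(M_2(R;s)\big)$, so that $B-I_2$ is a unit; if $a\in U(R)$, $b\in J(R)$ take $E=\left[\begin{array}{cc} 0 & 0\\ 0 & 1\end{array}\right]$, so that $B-E=\left[\begin{array}{cc} a & 0\\ 0 & b-1\end{array}\right]$; and if $a\in J(R)$, $b\in U(R)$ take $E=\left[\begin{array}{cc} 1 & 0\\ 0 & 0\end{array}\right]$. In the last two cases both diagonal entries of $B-E$ lie in $U(R)$ (since $J(R)-1\subseteq U(R)$), and a one-line computation with the multiplication of $M_2(R;s)$ shows that a diagonal matrix with both entries invertible is a unit, its inverse being the diagonal matrix of the inverses. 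Hence $B$, and therefore $A$, is strongly clean.

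The corollary is essentially bookkeeping, so there is no serious obstacle; the only point that needs a moment's care is the mixed case, where one must subtract the \emph{opposite} rank-one idempotent --- not the one matching the unit entry --- so that subtracting $1$ pushes the $J(R)$-entry into $U(R)$ instead of risking pushing the unit entry out of $U(R)$. With that choice every subcase is a routine verification and nothing deeper is required.
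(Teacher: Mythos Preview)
Your argument is correct. The paper offers no proof beyond the sentence ``Hence the following corollary is immediate,'' so your write-up is simply an explicit version of that claim. The forward direction is indeed immediate from Lemma~\ref{lem8}, exactly as you say.

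For the converse there is a small point worth flagging: cases (3) and (4) of Lemma~\ref{lem8} only cover diagonal matrices $\left[\begin{smallmatrix} v & 0\\ 0 & w\end{smallmatrix}\right]$ with $v\in 1+J(R)$ and $w\in J(R)$, whereas condition (3) of the corollary allows arbitrary $a,b\in R$. So if, say, $a\in U(R)\setminus(1+J(R))$ and $b\in J(R)$, Lemma~\ref{lem8} does not literally apply, and one really does need the direct four-case verification you carry out. In that sense the paper's ``immediate'' is a mild overstatement, and your case split (choosing $E$ from the four diagonal idempotents according to which of $a,b$ lie in $U(R)$ or $J(R)$) is the honest content of the converse. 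Your computation that a diagonal matrix with unit entries is invertible in $M_2(R;s)$ is correct, since the $s^2$-terms vanish for products of diagonal matrices.
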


We now extend \cite[Theorem 5.5]{C1} to $M_2(R;s)$.

\begin{thm}\label{teo1} Let $R$ be a local ring with $s\in C(R)$. Then $A\in M_2(R;s)$ is strongly clean if and only if one of the following holds:
\begin{itemize}
\item[(1)] $A\in U\big(M_2(R;s)\big)$,
\item[(2)] $I_2-A\in U\big(M_2(R;s)\big)$,
\item[(3)] $A\in M_2(R;s)$ is strongly $J$-clean.
\end{itemize}
\end{thm}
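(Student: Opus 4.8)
The plan is to derive the theorem directly from the structural descriptions already obtained: Lemma~\ref{lem7} for strongly $J$-clean elements and Lemma~\ref{lem8} for strongly clean elements. The single auxiliary fact I need is that, when $s\in U(R)$, one has $\left[\begin{array}{cc} v & 0\\ 0 & w\end{array}\right]\sim\left[\begin{array}{cc} w & 0\\ 0 & v\end{array}\right]$ in $M_2(R;s)$. This holds because $P=\left[\begin{array}{cc} 0 & 1\\ 1 & 0\end{array}\right]\in U\big(M_2(R;s)\big)$, with $P^{-1}=\left[\begin{array}{cc} 0 & s^{-2}\\ s^{-2} & 0\end{array}\right]$ (the very computation used in the proof of Lemma~\ref{idempotent}), and conjugation by $P$ interchanges the diagonal entries. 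Consequently clause (3) of Lemma~\ref{lem7}, clause (3) of Lemma~\ref{lem8}, and clause (4) of both are pairwise equivalent up to similarity, so the proof reduces to matching the items in the two lists.

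First I would treat $(1)$ or $(2)$ or $(3)$ $\Rightarrow$ strongly clean. If $A\in U\big(M_2(R;s)\big)$, take $E=0$; if $I_2-A\in U\big(M_2(R;s)\big)$, take $E=I_2$; in either case $E$ commutes with $A$ and $A-E\in U\big(M_2(R;s)\big)$. If $A$ is strongly $J$-clean, pick $E^2=E$ with $AE=EA$ and $A-E\in J\big(M_2(R;s)\big)$, and put $F=I_2-E$. Then $F^2=F$, $AF=FA$, and $A-F=(A-E)+(2E-I_2)$, where $2E-I_2$ is a unit (it squares to $I_2$) commuting with the radical element $A-E$; hence $A-F\in U\big(M_2(R;s)\big)$ and $A$ is strongly clean. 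This implication is also visible simply by comparing Lemma~\ref{lem7} with Lemma~\ref{lem8}, using the similarity above together with the fact that $A\in J\big(M_2(R;s)\big)$ forces $I_2-A\in U\big(M_2(R;s)\big)$ and vice versa.

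For the converse, assume $A$ is strongly clean and that neither $(1)$ nor $(2)$ holds, i.e. $A,\,I_2-A\notin U\big(M_2(R;s)\big)$. By Lemma~\ref{lem8}, one of its clauses (3) or (4) must be in force. If clause (4) holds, then clause (4) of Lemma~\ref{lem7} holds word for word, so $A$ is strongly $J$-clean. If clause (3) of Lemma~\ref{lem8} holds, then $s\in U(R)$ and $A\sim\left[\begin{array}{cc} w & 0\\ 0 & v\end{array}\right]$ with $v\in 1+J(R)$, $w\in J(R)$; applying the similarity from the first paragraph, $A\sim\left[\begin{array}{cc} v & 0\\ 0 & w\end{array}\right]$, which is clause (3) of Lemma~\ref{lem7}, so $A$ is again strongly $J$-clean. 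Either way, $(3)$ of the theorem holds.

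The only real content of the argument is the interchange of the diagonal entries in the case $s\in U(R)$; in the case $s\in J(R)$ nothing is required, since clause (4) of each of Lemma~\ref{lem7} and Lemma~\ref{lem8} already allows both orderings of $v$ and $w$. I expect this to be the only place where care is needed. As an alternative to invoking Lemma~\ref{lem8}, one could run the converse through Corollary~\ref{faydali}, writing $A\sim\left[\begin{array}{cc} a & 0\\ 0 & b\end{array}\right]$ and splitting on the images of $a,b$ in $R/J(R)$; the hypotheses $A,\,I_2-A\notin U\big(M_2(R;s)\big)$ --- read off from Lemma~\ref{lem2} when $s\in J(R)$ and from $M_2(R;s)\cong M_2(R)$ when $s\in U(R)$ --- then force exactly one of $a,b$ into $J(R)$ and the other into $1+J(R)$, landing once more in Lemma~\ref{lem7}. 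Routing through Lemma~\ref{lem8} avoids that bookkeeping.
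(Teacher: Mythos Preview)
Your proposal is correct and follows essentially the same path as the paper: both use Lemma~\ref{lem8} to reduce the forward implication to the diagonal case with $v\in 1+J(R)$, $w\in J(R)$, and then conclude strong $J$-cleanness. The paper, instead of invoking Lemma~\ref{lem7} via your diagonal-swap similarity, simply writes down the decomposition $\left[\begin{smallmatrix} w&0\\0&v\end{smallmatrix}\right]=\left[\begin{smallmatrix}0&0\\0&1\end{smallmatrix}\right]+\left[\begin{smallmatrix}w&0\\0&v-1\end{smallmatrix}\right]$ (and analogously in the other cases) and appeals to Lemma~\ref{lem6}; your route through Lemma~\ref{lem7} is an equivalent repackaging. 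For the implication ``strongly $J$-clean $\Rightarrow$ strongly clean'', the paper cites \cite[Proposition~2.1]{C1}, whereas you give the self-contained argument with $F=I_2-E$ and $(2E-I_2)^2=I_2$; your version has the small advantage of keeping the proof internal to the paper.
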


\begin{proof} Assume that $A\in M_2(R;s)$ is strongly clean. We may assume that $A\notin
U\big(M_2(R;s)\big)$ and $I_2-A\notin U\big(M_2(R;s)\big)$. If $s\in U(R)$, $A\sim \left[%
\begin{array}{cc}
 w & 0\\
  0 & v\\
\end{array}%
\right]$, where $v\in 1+J(R)$ and $w\in J(R)$, by Lemma~\ref{lem8}. Since $\left[%
\begin{array}{cc}
 w & 0\\
  0 & v \\
\end{array}%
\right]=\left[%
\begin{array}{cc}
 0 & 0\\
  0 & 1 \\
\end{array}%
\right]+\left[%
\begin{array}{cc}
 w & 0\\
  0 & v-1 \\
\end{array}%
\right]$ is strongly $J$-clean, $A$ is strongly $J$-clean. If $s\in J(R)$, either $A\sim\left[%
\begin{array}{cc}
 v & 0\\
  0 & w \\
\end{array}%
\right]$ or $A\sim \left[%
\begin{array}{cc}
 w & 0\\
  0 & v \\
\end{array}%
\right]$, where $v\in 1+J(R)$ and $w\in J(R)$, by Lemma~\ref{lem8}. Since $\left[%
\begin{array}{cc}
 v & 0\\
  0 & w \\
\end{array}%
\right]=\left[%
\begin{array}{cc}
 1 & 0\\
  0 & 0 \\
\end{array}%
\right]+\left[%
\begin{array}{cc}
 v-1 & 0\\
  0 & w \\
\end{array}%
\right]$ is strongly $J$-clean, $A$ is strongly $J$-clean. Similarly, if $A\sim\left[%
\begin{array}{cc}
 w & 0\\
  0 & v \\
\end{array}%
\right]$, then $A$ is strongly $J$-clean. So holds (3).
Conversely, let $A\in M_2(R;s)$. If either (1) or (2), then $A$ is
strongly clean by Lemma~\ref{lem8}. Hence we may assume that $A$
is strongly $J$-clean. According to \cite[Proposition 2.1]{C1},
$A$ is strongly clean in $M_2(R;s)$.
\end{proof}

\begin{prop}\label{prop4} Let $R$ be a local ring and $s\in C(R)$. Then the following are equivalent.
\begin{enumerate}
    \item For every $A\in M_2(R;s)$ with $A, I_2-A\notin
    U\big(M_2(R;s)\big)$ is strongly nil clean.
    \item For every $A\in M_2(R;s)$ with $A, I_2-A\notin
    U\big(M_2(R;s)\big)$ is strongly $J$-clean and $J(R)$ is nil.
\end{enumerate}
\end{prop}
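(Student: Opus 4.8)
The plan is to obtain both implications from the two structural characterizations already established — Lemma~\ref{lem7} for strongly $J$-clean elements of $M_2(R;s)$ and Lemma~\ref{lem1.1} for strongly nil clean elements — by matching their alternatives against each other, the two directions being essentially symmetric. Three preliminary observations will be used throughout. First, since $R$ is local, $R/J(R)$ is a division ring, so $Nil(R)\subseteq J(R)$. Second, again since $R$ is local, either $s\in U(R)$ or $s\in J(R)$, so exactly one of alternatives (3),(4) in each of Lemmas~\ref{lem7} and~\ref{lem1.1} is applicable. Third, the standing hypothesis $A,I_2-A\notin U\big(M_2(R;s)\big)$ excludes alternatives (1) and (2) of both lemmas: if $A$ is nilpotent or lies in $J\big(M_2(R;s)\big)$ then $I_2-A\in U\big(M_2(R;s)\big)$, and if $I_2-A$ is nilpotent or lies in $J\big(M_2(R;s)\big)$ then $A\in U\big(M_2(R;s)\big)$. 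Hence under that hypothesis both ``strongly $J$-clean'' and ``strongly nil clean'' amount to saying that $A$ is similar to a diagonal matrix $\left[\begin{smallmatrix}b_1&0\\0&b_2\end{smallmatrix}\right]$ one of whose entries lies in $1+J(R)$ (resp. $1+Nil(R)$) and the other in $J(R)$ (resp. $Nil(R)$).

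For $(2)\Rightarrow(1)$ I would take any $A$ with $A,I_2-A\notin U\big(M_2(R;s)\big)$; it is strongly $J$-clean by hypothesis, so by Lemma~\ref{lem7} and the exclusions above it is similar to a diagonal matrix whose entries lie in $1+J(R)$ and $J(R)$. Since $J(R)$ is assumed nil, $1+J(R)\subseteq 1+Nil(R)$ and $J(R)\subseteq Nil(R)$, so this is precisely alternative (3) (if $s\in U(R)$) or (4) (if $s\in J(R)$) of Lemma~\ref{lem1.1}, and $A$ is strongly nil clean.

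For $(1)\Rightarrow(2)$ the ``strongly $J$-clean'' half is the mirror image: such an $A$ is strongly nil clean by hypothesis, falls under alternative (3) or (4) of Lemma~\ref{lem1.1}, and since $Nil(R)\subseteq J(R)$ the resulting diagonal form satisfies the requirements of alternative (3), resp. (4), of Lemma~\ref{lem7}. The substantive point is that $J(R)$ is nil. For this I would fix $a\in J(R)$ and test the element $A=\left[\begin{smallmatrix}a&0\\0&1\end{smallmatrix}\right]\in M_2(R;s)$: a one-line computation with the multiplication of $M_2(R;s)$ shows that $A$ has no right inverse (a right inverse of $A$ would yield a right inverse of $a$, impossible as $a\in J(R)$) and that $I_2-A=\left[\begin{smallmatrix}1-a&0\\0&0\end{smallmatrix}\right]$ has no right inverse, so $A,I_2-A\notin U\big(M_2(R;s)\big)$. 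By (1), $A$ is strongly nil clean, so by the reduction above $A\sim B$ for a diagonal $B=\left[\begin{smallmatrix}b_1&0\\0&b_2\end{smallmatrix}\right]$ with each $b_i$ in $1+Nil(R)$ or in $Nil(R)$; in either case $b_i-b_i^2\in Nil(R)$, hence $B(I_2-B)=\left[\begin{smallmatrix}b_1-b_1^2&0\\0&b_2-b_2^2\end{smallmatrix}\right]$ is nilpotent in $M_2(R;s)$. Writing $A=PBP^{-1}$ with $P\in U\big(M_2(R;s)\big)$ gives $A(I_2-A)=P\,B(I_2-B)\,P^{-1}$, which is therefore nilpotent; but a direct computation gives $A(I_2-A)=\left[\begin{smallmatrix}a-a^2&0\\0&0\end{smallmatrix}\right]$, so $a-a^2\in Nil(R)$. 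Since $1-a\in U(R)$ commutes with $a$, this forces $a\in Nil(R)$. Thus $J(R)\subseteq Nil(R)$, i.e. $J(R)$ is nil.

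I expect the proof that $J(R)$ is nil to be the only genuinely delicate step; the rest is bookkeeping between the two lemmas. What makes that step go through is the particular test element $A=\left[\begin{smallmatrix}a&0\\0&1\end{smallmatrix}\right]$, for which $A(I_2-A)=\left[\begin{smallmatrix}a-a^2&0\\0&0\end{smallmatrix}\right]$, together with the remark that for a diagonal matrix $B$ both of whose entries lie in $1+Nil(R)$ or in $Nil(R)$ the product $B(I_2-B)$ is nilpotent regardless of how the entries are ordered — so one never has to decide which of the two similarities in alternative (4) of Lemma~\ref{lem1.1} actually occurs.
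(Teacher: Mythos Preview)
Your proof is correct and shares the paper's overall strategy: both directions hinge on the structural Lemma~\ref{lem1.1}, and the key step in $(1)\Rightarrow(2)$ is to test the hypothesis on the diagonal matrix $A=\left[\begin{smallmatrix}a&0\\0&1\end{smallmatrix}\right]$ for $a\in J(R)$. Where you differ is in two details of execution. First, to pass from the similarity $A\sim\left[\begin{smallmatrix}w&0\\0&v\end{smallmatrix}\right]$ (with $w\in Nil(R)$, $v\in 1+Nil(R)$) to $a\in Nil(R)$, the paper simply asserts ``Therefore $x\in Nil(R)$'', whereas you produce an explicit similarity invariant---nilpotency of $A(I_2-A)$---which reads off as $a(1-a)\in Nil(R)$ and hence $a\in Nil(R)$; this is a genuine addition, since for a non-commutative local ring no trace or determinant argument is available. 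Second, for the implication ``strongly nil clean $\Rightarrow$ strongly $J$-clean'' the paper appeals to Lemma~\ref{temel1}, whose hypothesis is $s\in J(R)$ and so does not literally cover the case $s\in U(R)$; your route, matching alternatives (3)/(4) of Lemma~\ref{lem1.1} directly against those of Lemma~\ref{lem7} via $Nil(R)\subseteq J(R)$, handles both cases uniformly. In short, the plan is the same but your argument is tighter at precisely the two spots where the paper's is elliptical.
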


\begin{proof} $(1)\Rightarrow(2)$ Let $x\in J(R)$ and let $A=\left[%
\begin{array}{cc}
 x & 0\\
 0 & 1 \\
\end{array}%
\right]\in M_2(R;s)$. Since $A\notin Nil\big(M_2(R;s)\big)$ and
$I_2-A\notin Nil\big(M_2(R;s)\big)$, we have $A\sim \left[%
\begin{array}{cc}
 w & 0\\
  0 & v \\
\end{array}%
\right]$, where $v\in 1+Nil(R)$, $w\in Nil(R)$ by
Lemma~\ref{lem1.1}. Therefore, $x\in Nil(R)$ and so $J(R)$ is nil.
By Lemma~\ref{temel1}, for every $A\in M_2(R;s)$ with $A,
I_2-A\notin U\big(M_2(R;s)\big)$ if $A$ is strongly nil clean,
then it is strongly $J$-clean.

$(2)\Rightarrow(1)$ Since $J(R)$ is nil, it is obvious.
\end{proof}

\begin{lem}\label{temel} Let $R$ be a local ring with $s\in J(R)\cap
C(R)$. Assume that $A\in M_2(R;s)$ such that neither $A$ nor
$I_2-A$
is a unit. Then $A$ is similar to $\left[%
\begin{array}{cc}
  u & 1\\
  v & w \\
\end{array}%
\right]$ or $\left[%
\begin{array}{cc}
  w & 1\\
  v & u\\
\end{array}%
\right]$, where $u\in 1+J(R)$, $v\in U(R)$, $w\in J(R)$.
\end{lem}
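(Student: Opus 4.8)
The plan is to reduce, via Lemma~\ref{lem2}, to a clean dichotomy on the diagonal entries of $A$, and then to reach the required form through a short sequence of conjugations by elementary units of $M_2(R;s)$. Write $A=\begin{bmatrix} a & b \\ c & d\end{bmatrix}$. Since $s\in J(R)$, Lemma~\ref{lem2} gives that $A\notin U\big(M_2(R;s)\big)$ forces $a\in J(R)$ or $d\in J(R)$, while $I_2-A\notin U\big(M_2(R;s)\big)$ forces $a\in 1+J(R)$ or $d\in 1+J(R)$. As $R$ is local, $J(R)$ and $1+J(R)$ are disjoint, and combining the two conditions one sees that exactly one of the following holds: (I) $a\in 1+J(R)$ and $d\in J(R)$; or (II) $a\in J(R)$ and $d\in 1+J(R)$. (If $a\in J(R)$, then $a\notin 1+J(R)$, forcing $d\in 1+J(R)$, i.e.\ (II); if $a\in U(R)$, then $d\in J(R)$, hence $d\notin 1+J(R)$, forcing $a\in 1+J(R)$, i.e.\ (I).)

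In Case (I) I would use that $\begin{bmatrix}1&1\\0&1\end{bmatrix}$, $\begin{bmatrix}1&0\\1&1\end{bmatrix}$ and $\begin{bmatrix}1&0\\0&t\end{bmatrix}$ (for $t\in U(R)$) all lie in $U\big(M_2(R;s)\big)$ by Lemma~\ref{lem2}. Direct computation with the twisted product shows: conjugating $\begin{bmatrix}a&b\\c&d\end{bmatrix}$ by $\begin{bmatrix}1&1\\0&1\end{bmatrix}$ yields $\begin{bmatrix}a+s^2c & b+d-a-s^2c\\ c & d-s^2c\end{bmatrix}$; conjugating by $\begin{bmatrix}1&0\\0&t\end{bmatrix}$ yields $\begin{bmatrix}a&bt^{-1}\\ tc& tdt^{-1}\end{bmatrix}$; and conjugating by $\begin{bmatrix}1&0\\1&1\end{bmatrix}$ yields $\begin{bmatrix}a-s^2b & b\\ a+c-s^2b-d & s^2b+d\end{bmatrix}$. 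Starting from Case (I): if $b\notin U(R)$ (so $b\in J(R)$), the first conjugation replaces the $(1,2)$-entry by $b+d-a-s^2c\in -1+J(R)\subseteq U(R)$ and keeps the $(1,1)$-entry in $1+J(R)$ and the $(2,2)$-entry in $J(R)$; so we may assume $b\in U(R)$. Conjugating by $\mathrm{diag}(1,b)$ then normalizes the $(1,2)$-entry to $1$ and conjugates $d$, keeping it in $J(R)$. Now $A\sim\begin{bmatrix}a'&1\\ c'&d'\end{bmatrix}$ with $a'\in 1+J(R)$, $d'\in J(R)$; if $c'\in U(R)$ we are done, and if $c'\in J(R)$ one further conjugation by $\begin{bmatrix}1&0\\1&1\end{bmatrix}$ leaves the $(1,2)$-entry equal to $1$ and replaces $c'$ by $a'+c'-s^2-d'\in 1+J(R)\subseteq U(R)$, while the diagonal entries stay in $1+J(R)$ and $J(R)$. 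This gives $A\sim\begin{bmatrix}u&1\\ v&w\end{bmatrix}$ with $u\in 1+J(R)$, $v\in U(R)$, $w\in J(R)$.

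For Case (II) I would use the map $\phi:\begin{bmatrix}a&b\\c&d\end{bmatrix}\mapsto\begin{bmatrix}d&c\\b&a\end{bmatrix}$, which — because $s\in C(R)$ — is a ring automorphism of $M_2(R;s)$ fixing $I_2$; this is a short verification comparing the four entries of $\phi(XY)$ with those of $\phi(X)\phi(Y)$, and it is exactly here that centrality of $s$ enters. If $A$ is in Case (II), then $\phi(A)$ is in Case (I), so $\phi(A)\sim\begin{bmatrix}u&1\\v&w\end{bmatrix}$ with $u\in1+J(R)$, $v\in U(R)$, $w\in J(R)$ by the previous paragraph. Applying $\phi$, and using $\phi^2=\mathrm{id}$ together with the fact that $\phi$ preserves units, gives $A\sim\phi\!\left(\begin{bmatrix}u&1\\v&w\end{bmatrix}\right)=\begin{bmatrix}w&v\\1&u\end{bmatrix}$, and a final conjugation by $\mathrm{diag}(1,v)$ turns this into $\begin{bmatrix}w&1\\v&vuv^{-1}\end{bmatrix}$, which (as $vuv^{-1}\in 1+J(R)$) is of the second required form.

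I expect the only genuine obstacle to be bookkeeping: carrying out each conjugation correctly under the twisted multiplication, and — more delicately — checking at every step that the $(1,1)$- and $(2,2)$-entries remain in $1+J(R)$ and $J(R)$ respectively, so that the terminal matrix really has the asserted shape. The one conceptual point is the verification that $\phi$ is multiplicative, which reduces Case (II) to Case (I) at no extra cost.
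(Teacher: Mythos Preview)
Your argument is correct. The diagonal dichotomy via Lemma~\ref{lem2}, the three elementary conjugations in Case~(I), and the ``swap'' automorphism $\phi$ for Case~(II) all check out under the twisted multiplication of $M_2(R;s)$; the only cosmetic point is that the remark ``centrality of $s$ enters here'' is not strictly needed for $\phi$ to be multiplicative, since $s^2$ appears on the same side in both $\phi(XY)$ and $\phi(X)\phi(Y)$.

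As for comparison with the paper: the paper does not actually give a proof of this lemma --- it simply says the argument is ``similar to the proof of \cite[Lemma~5]{TZ}''. Your write-up is therefore a fully worked-out version of what the paper defers to a reference, and the sequence of elementary conjugations you use is exactly the kind of reduction one expects that cited lemma to contain. The use of the automorphism $\phi$ to reduce Case~(II) to Case~(I) is a clean touch that avoids repeating the elementary-matrix computations a second time.
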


\begin{proof} It is similar to the proof of \cite[Lemma 5]{TZ}.
\end{proof}

\begin{thm}\label{temelteo1} Let $R$ be a local ring with $s\in J(R)\cap C(R)$.
Then $A\in M_2(R;s)$ is strongly $J$-clean if and only if one of
the following holds:
\begin{itemize}
\item[(1)] $A\in J\big(M_2(R;s)\big)$,
\item[(2)] $I_2-A\in J\big(M_2(R;s)\big)$,
\item[(3)] $A$ is similar to $\left[%
\begin{array}{cc}
  u & 1\\
  v & w \\
\end{array}%
\right]$, where $u\in 1+J(R)$, $v\in U(R)$, $w\in J(R)$, and the
equation $t^2 -(vuv^{-1} +w)t +(vuv^{-1}w -s^2v)$ has a right root
in $1+J(R)$ and $t^2 -(u+w)t +(wu-s^2v)$ has a right root in
$J(R)$.
\item[(4)] $A$ is similar to $\left[%
\begin{array}{cc}
  w & 1\\
  v & u\\
\end{array}%
\right]$, where $u\in 1+J(R)$, $v\in U(R)$, $w\in J(R)$, and the
equation $t^2 -(u+vwv^{-1})t +(vwv^{-1}u -s^2v)$ has a right root
in $J(R)$ and $t^2 -(u+w)t +(uw-s^2v)$ has a right root in
$1+J(R)$.
\end{itemize}
\end{thm}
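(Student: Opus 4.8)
The plan is to reduce the problem to a normal form via similarity and then translate ``strongly $J$-clean'' into a polynomial condition on the entries, exactly in the spirit of Lemma~\ref{lem7} and Lemma~\ref{temel}. First I would dispose of the easy implications: if $A\in J\big(M_2(R;s)\big)$ or $I_2-A\in J\big(M_2(R;s)\big)$ then $A$ is trivially strongly $J$-clean, and by Lemma~\ref{lem6} strong $J$-cleanness is preserved under conjugation by units of $M_2(R;s)$, so for the remaining cases it suffices to treat the two normal forms $\left[\begin{array}{cc} u & 1\\ v & w\end{array}\right]$ and $\left[\begin{array}{cc} w & 1\\ v & u\end{array}\right]$ supplied by Lemma~\ref{temel}. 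So assume $A$ is strongly $J$-clean and neither $A$ nor $I_2-A$ lies in $J\big(M_2(R;s)\big)$; by Lemma~\ref{lem7}(4) we already know $A$ is similar to a diagonal matrix $\mathrm{diag}(v',w')$ with $v'\in 1+J(R)$, $w'\in J(R)$ (in one of the two orders), and the idempotent witnessing strong $J$-cleanness is (a conjugate of) $E_{11}$ or $E_{22}$.

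\textbf{Key computation.} The heart of the argument is the following: $A=\left[\begin{array}{cc} u & 1\\ v & w\end{array}\right]$ is strongly $J$-clean with idempotent similar to $E_{11}$ precisely when there is $P\in U\big(M_2(R;s)\big)$ conjugating $E$ to $E_{11}$, equivalently when $A$ can be written as $E+W$ with $E^2=E$ non-trivial, $EW=WE$, $W\in J\big(M_2(R;s)\big)$. Writing $E=\left[\begin{array}{cc} e & f\\ g & h\end{array}\right]$, the commuting condition $EW=WE$ together with $W=A-E\in J\big(M_2(R;s)\big)$ forces (using Lemma~\ref{lem2}, since $s\in J(R)$, the corner entries are automatically in $R$ and the diagonal entries must lie in $J(R)$) a system of equations; I expect that after normalizing $f$ to be a unit (it must be, since the $(1,2)$-entry of $A$ is $1$ and that of $W$ lies in $J(R)$) one can solve for $E$ in terms of a single parameter $t=e\in 1+J(R)$, and the idempotent equation $e^2+s^2fg=e$ together with $g = f^{-1}(e - e^2/f\cdots)$-type substitutions collapses to the statement that $t$ is a right root of a quadratic. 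Concretely, conjugating by $\mathrm{diag}(v,1)$-type and triangular units turns $A$ into $\left[\begin{array}{cc} vuv^{-1}+\ast & \ast \\ \ast & w+\ast\end{array}\right]$ and chasing the idempotent condition yields the two displayed quadratics: $t^2-(vuv^{-1}+w)t+(vuv^{-1}w-s^2v)$ having a root in $1+J(R)$ encodes the ``$(1,1)$-corner'' of the idempotent being a unit-part, while $t^2-(u+w)t+(wu-s^2v)$ having a root in $J(R)$ encodes the complementary corner. The characteristic-polynomial-like shape of these quadratics is exactly $\det(tI_2-A)$ computed in the $M_2(R;s)$-multiplication (note the $s^2v$ rather than $v$, coming from the twisted product), so I would first verify that $A$ satisfies $A^2-(u+w)A+(wu-s^2)I_2 \equiv 0$ up to the off-diagonal twist, and read off the root conditions from the requirement that $A$ splits as a commuting sum of an invertible-mod-$J$ part and a $J$-part.

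\textbf{The two cases and the converse.} Case (4), $A=\left[\begin{array}{cc} w & 1\\ v & u\end{array}\right]$, is handled identically after swapping the roles of the two diagonal positions, which is why the root locations ($J(R)$ versus $1+J(R)$) are interchanged in the statement. For the converse direction, given a right root $t_1\in 1+J(R)$ of the first quadratic and $t_2\in J(R)$ of the second, I would explicitly build the idempotent $E$: its $(1,1)$-entry will be $t_1$ (or a unit conjugate thereof), the other entries determined by $f$, $g$ via the relations forced above, check $E^2=E$ using that $t_i$ is a root, check $A-E\in J\big(M_2(R;s)\big)$ using Lemma~\ref{lem2} (diagonal entries of $A-E$ are $u-t_1\in J(R)$ and $w-t_2\in J(R)$ since $u\in 1+J(R)$, $t_1\in 1+J(R)$, etc.), and check $EA=AE$ using both root relations simultaneously --- this is where needing \emph{two} roots (rather than one) enters, since $E$ commuting with $A$ forces both the $(1,1)$ and $(2,2)$ Peirce corners to behave correctly.

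\textbf{Main obstacle.} The delicate point is bookkeeping the twisted multiplication: in $M_2(R;s)$ the product has $s^2bc'$ in the $(1,1)$-slot and $s^2cb'$ in the $(2,2)$-slot, so the naive ``characteristic polynomial'' is modified, and since $s\in J(R)$ the term $s^2v$ is in $J(R)$ but \emph{not} zero, so one cannot simply discard it. Getting the exact coefficients $vuv^{-1}w-s^2v$ and $wu-s^2v$ right (including which conjugate of $u$ appears, which depends on whether one conjugates to clear the $(1,2)$ or $(2,1)$ entry) is the real content; I would set up the similarity $P=\left[\begin{array}{cc} 1 & 0\\ \ast & 1\end{array}\right]\left[\begin{array}{cc} v & 0\\ 0 & 1\end{array}\right]$ carefully, track where $v$ conjugates $u$, and then the quadratics emerge as the condition that the resulting matrix is a commuting sum $E+W$. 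Everything else --- the trivial cases, the reduction via Lemmas~\ref{lem6} and \ref{temel}, and the ``$\Leftarrow$'' verification --- is routine once the algebra of $M_2(R;s)$ is pinned down.
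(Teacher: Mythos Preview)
Your approach is correct in outline but takes a genuinely different route from the paper. You propose to work \emph{directly}: reduce to the normal form via Lemma~\ref{temel}, then for $B=\left[\begin{smallmatrix} u & 1\\ v & w\end{smallmatrix}\right]$ unpack the equation $B=E+W$ with $E^2=E$, $EW=WE$, $W\in J\big(M_2(R;s)\big)$, and solve for the entries of $E$ until the two quadratics emerge; for the converse you would build $E$ explicitly from the roots. This is essentially a reconstruction of \cite[Lemma~7]{TZ} (which is what the coefficients $vuv^{-1}w-s^2v$ and $wu-s^2v$ are encoding), and it can be carried out, though the bookkeeping you flag as the ``main obstacle'' is real.

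The paper instead avoids all of that computation by passing through \emph{strong cleanness}. For ``$\Rightarrow$'': if $A$ is strongly $J$-clean and $A,\,I_2-A\notin J\big(M_2(R;s)\big)$, then Lemma~\ref{lem4} forces $A,\,I_2-A\notin U\big(M_2(R;s)\big)$, so Lemma~\ref{temel} gives the normal form, and then \cite[Lemma~7]{TZ} (applied to the strongly clean matrix $B$) supplies the root conditions for free. For ``$\Leftarrow$'': given the root conditions, \cite[Lemma~7]{TZ} says $B$ is strongly clean; since $w\in J(R)$ and $1-u\in J(R)$, Lemma~\ref{lem2} gives $B,\,I_2-B\notin U\big(M_2(R;s)\big)$, and now Theorem~\ref{teo1} upgrades ``strongly clean'' to ``strongly $J$-clean''. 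So the paper's proof is three lines of citation, whereas yours is a self-contained rederivation of the cited lemma. Your route buys independence from \cite{TZ}; the paper's route buys brevity and makes transparent that the theorem is exactly ``\cite[Lemma~7]{TZ} plus the bridge Theorem~\ref{teo1}''. If you want to shorten your write-up, the one observation you are missing is precisely Theorem~\ref{teo1}: once $A$ and $I_2-A$ are non-units, strongly clean and strongly $J$-clean coincide, so you need not redo the idempotent algebra at all.
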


\begin{proof} Assume that $A$ is strongly $J$-clean in $M_2(R;s)$ and $A, I_2-A\notin
J\big(M_2(R;s)\big)$. Then $A$ is strongly clean. If $A\in
U\big(M_2(R;s)\big)$ or $I_2-A\in U\big(M_2(R;s)\big)$, then
$I_2-A\in J\big(M_2(R;s)\big)$ or $A\in J\big(M_2(R;s)\big)$,
respectively, by Example~\ref{ex1}(3) and Lemma~\ref{lem4}, a
contradiction. Hence we may assume that $A\notin
U\big(M_2(R;s)\big)$ and $I_2-A\notin
U\big(M_2(R;s)\big)$. By Lemma~\ref{temel}, $A$ is similar to either $\left[%
\begin{array}{cc}
  u & 1\\
  v & w \\
\end{array}%
\right]$ or $\left[%
\begin{array}{cc}
  w & 1\\
  v & u\\
\end{array}%
\right]$, where $u\in 1+J(R)$, $v\in U(R)$, $w\in J(R)$. If $A\sim \left[%
\begin{array}{cc}
  u & 1\\
  v & w \\
\end{array}%
\right]$, then the equation $t^2 -(vuv^{-1} +w)t +(vuv^{-1}w
-s^2v)$ has a right root in $1+J(R)$ and $t^2 -(u+w)t
+(wu-s^2v)$ has a right root in $J(R)$ by \cite[Lemma 7]{TZ}. If $A\sim \left[%
\begin{array}{cc}
  w & 1\\
  v & u \\
\end{array}%
\right]$, then we similarly show that the equation  $t^2
-(u+vwv^{-1})t +(vwv^{-1}u -s^2v)$ has a right root in $J(R)$ and
$t^2 -(u+w)t +(uw-s^2v)$ has a right root in $1+J(R)$.

We now prove the converse. If $(1)$ or $(2)$ holds, then $A$ is
strongly $J$-clean. Suppose that $(3)$ holds. It suffices to show
that $B=\left[%
\begin{array}{cc}
  u & 1\\
  v & w \\
\end{array}%
\right]$ is a strongly $J$-clean element of $M_2(R;s)$. By
\cite[Lemma 7]{TZ}, $B$ is strongly clean. Since $B\notin
U\big(M_2(R;s)\big)$ and $I_2-B\notin U\big(M_2(R;s)\big)$, $B$ is
strongly $J$-clean by Theorem~\ref{teo1}. If (4) holds, we
similarly show that $B$ is strongly $J$-clean.
\end{proof}

For a ring $R$, let $a\in R$, $l_a : R\rightarrow R$ and $r_a :
R\rightarrow R$ denote, respectively, the abelian group
endomorphisms given by $l_a(r) = ar$ and $r_a(r) = ra$ for all
$r\in R$. Thus, for $a$, $b\in R$, $l_a$, $r_b$ is an abelian
group endomorphism such that $(l_a - r_b)(r) = ar - rb$ for any
$r\in R$. A local ring R is called {\it weakly bleached} \cite{YZ}
if, for any $a\in J(R)$ and any $b\in 1+J(R)$, the abelian group
endomorphisms $l_b - r_a$ and $l_a - r_b$ of $R$ are both
surjective.

\begin{lem} \label{chen} Let $R$ be a local ring with $s=\sum\limits_{i=1}^{\infty}s_ix^i\in C\big(R[[x]]\big)\cap
J\big(R[[x]]\big)$, and let $A(x)\in M_2\big(R[[x]];s\big)$. If
$A(0)$ is similar to $\left[%
\begin{array}{cc}
  u & 1\\
  v & w \\
\end{array}%
\right]$ or $\left[%
\begin{array}{cc}
  w & 1\\
  v & u\\
\end{array}%
\right]$, where $u\in 1+J(R)$, $v\in U(R)$, $w\in J(R)$, then
$A(x)$ is similar to $\left[%
\begin{array}{cc}
  u(x) & 1\\
  v(x) & w(x) \\
\end{array}%
\right]$ or $\left[%
\begin{array}{cc}
  w(x) & 1\\
  v(x) & u(x)\\
\end{array}%
\right]$, where $u(x)\in 1+J\big(R[[x]]\big)$, $v(x)\in
U\big(R[[x]]\big)$, $w(x)\in J\big(R[[x]]\big)$ and $u(0)=u$,
$v(0)=v$, $w(0)=w$.
\end{lem}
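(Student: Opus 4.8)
The plan is to bootstrap the conjugating unit and the structural entries from the constant term $x=0$ up to all of $R[[x]]$, exploiting the fact that $R[[x]]$ is again local with $J(R[[x]]) = J(R) + xR[[x]]$ and that $s \in C(R[[x]]) \cap J(R[[x]])$. First I would invoke Lemma~\ref{temel} applied to the local ring $R[[x]]$ with the element $s \in J(R[[x]]) \cap C(R[[x]])$: since $A(x)$ has $A(0)$ similar to a matrix of the stated form, in particular neither $A(0)$ nor $I_2 - A(0)$ is a unit of $M_2(R;s)$, hence neither $A(x)$ nor $I_2 - A(x)$ is a unit of $M_2(R[[x]];s)$ (invertibility of a power-series matrix is detected by its constant term via Lemma~\ref{lem2}, since $s$ lies in the radical). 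Therefore $A(x)$ is similar over $M_2(R[[x]];s)$ to $\left[\begin{array}{cc} u(x) & 1 \\ v(x) & w(x) \end{array}\right]$ or to $\left[\begin{array}{cc} w(x) & 1 \\ v(x) & u(x) \end{array}\right]$ with $u(x) \in 1 + J(R[[x]])$, $v(x) \in U(R[[x]])$, $w(x) \in J(R[[x]])$.

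The remaining task is to pin down the constant terms: I must show that the conjugating unit can be chosen so that $u(0) = u$, $v(0) = v$, $w(0) = w$, matching the given similarity at $x = 0$. The key observation is that reducing the similarity $P(x) A(x) P(x)^{-1} = \left[\begin{array}{cc} u(x) & 1 \\ v(x) & w(x)\end{array}\right]$ modulo $x$ yields $P(0) A(0) P(0)^{-1} = \left[\begin{array}{cc} u(0) & 1 \\ v(0) & w(0)\end{array}\right]$ with $P(0) \in U(M_2(R;s))$, so $A(0)$ is similar over $M_2(R;s)$ to $\left[\begin{array}{cc} u(0) & 1 \\ v(0) & w(0)\end{array}\right]$. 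By hypothesis $A(0)$ is also similar to $\left[\begin{array}{cc} u & 1 \\ v & w\end{array}\right]$. So it remains to check that two matrices of the form $\left[\begin{array}{cc} u & 1 \\ v & w\end{array}\right]$ and $\left[\begin{array}{cc} u' & 1 \\ v' & w'\end{array}\right]$ (with the corner $1$) that are similar over $M_2(R;s)$ must in fact have $u = u'$, $v = v'$, $w = w'$ — or, failing a rigidity statement that strong, I would instead directly \emph{construct} the conjugation: starting from the fixed similarity $Q A(0) Q^{-1} = \left[\begin{array}{cc} u & 1 \\ v & w\end{array}\right]$ over $M_2(R;s)$, lift $Q$ to the constant matrix $Q \in M_2(R[[x]];s)$, replace $A(x)$ by $Q A(x) Q^{-1}$ (whose constant term is now exactly $\left[\begin{array}{cc} u & 1 \\ v & w\end{array}\right]$), and then run the argument of Lemma~\ref{temel} over $R[[x]]$ \emph{tracking the constant terms}: each elementary conjugation used there to clear an entry is of the form $I_2 + (\text{radical entry})$ and, when applied to a matrix whose constant part is already in the target normal form, introduces only $x$-divisible corrections, so the constant term is preserved throughout. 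That gives $u(0) = u$ etc.

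Concretely I would organize the write-up as: (i) observe $R[[x]]$ is local and $s \in J(R[[x]]) \cap C(R[[x]])$, and record $J(R[[x]]) = J(R) + xR[[x]]$, $1 + J(R[[x]]) = (1 + J(R)) + xR[[x]]$, $U(R[[x]]) = U(R) + xR[[x]]$; (ii) note that $A(0)$ not a unit and $I_2 - A(0)$ not a unit forces the same for $A(x)$, $I_2 - A(x)$ in $M_2(R[[x]];s)$ by Lemma~\ref{lem2}; (iii) fix the given similarity $Q A(0) Q^{-1}$ over $M_2(R;s)$ and conjugate $A(x)$ by the constant matrix $Q$ so that $(QA(x)Q^{-1})(0)$ is literally $\left[\begin{array}{cc} u & 1 \\ v & w\end{array}\right]$ (respectively the flipped version); (iv) apply Lemma~\ref{temel} over $R[[x]]$ to $QA(x)Q^{-1}$ and verify that the conjugating unit produced there reduces to $I_2$ modulo $x$ — equivalently, that all the elementary steps are $I_2 + xM_2(R[[x]];s) + (\text{already-cleared corners})$ — so the constant terms of $u(x), v(x), w(x)$ are unchanged. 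The main obstacle is step (iv): making sure the normalization procedure of Lemma~\ref{temel}, which the paper only cites via the analogue \cite[Lemma 5]{TZ}, can be executed so that it acts as the identity on constant terms. I would handle this by noting that once the constant part is already in normal form, the division-with-remainder / row-and-column-clearing operations only need to fix up the $x$-divisible tail, and every unit used is $\equiv I_2 \pmod{x}$; alternatively, cite the rigidity of the normal form over the local ring $R$ (any two such $\left[\begin{array}{cc} u & 1 \\ v & w\end{array}\right]$-type matrices similar over $M_2(R;s)$ are equal, which follows from comparing traces and $(1,2)$-entries under the restricted class of units allowed by Lemma~\ref{lem2}) to conclude $u(0)=u$, $v(0)=v$, $w(0)=w$ directly from step (ii)'s reduction mod $x$.
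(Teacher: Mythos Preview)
Your constructive route (steps (iii)--(iv)) is essentially what the paper does, but the paper executes step (iv) far more cheaply than you anticipate. After conjugating $A(x)$ by the constant lift of the given unit $P$ (your $Q$), the matrix $P^{-1}A(x)P$ already has constant term $\left[\begin{smallmatrix} u & 1 \\ v & w\end{smallmatrix}\right]$, so its entries lie in the correct cosets of $xR[[x]]$; the only defect is that the $(1,2)$-entry is $1+q(x)x$ rather than $1$. The paper fixes this with a \emph{single} diagonal conjugation by $\left[\begin{smallmatrix} 1+q(x)x & 0 \\ 0 & 1\end{smallmatrix}\right]$, which normalizes the $(1,2)$-entry to $1$ while manifestly preserving all constant terms (since this conjugating matrix is $\equiv I_2 \pmod{x}$). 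There is no need to re-run the full reduction of Lemma~\ref{temel} or to analyze its internal steps.

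Your initial plan (apply Lemma~\ref{temel} directly over $R[[x]]$ and then argue about constant terms afterward) also reaches the goal, but the ``rigidity'' shortcut you offer at the end is not justified: two matrices of the shape $\left[\begin{smallmatrix} u & 1 \\ v & w\end{smallmatrix}\right]$ that are similar in $M_2(R;s)$ need not be equal. Even in the commutative case you only get $u+w=u'+w'$ and $uw-s^2v=u'w'-s^2v'$, which is two conditions on three unknowns, and the $(1,2)$-entry being $1$ in both gives no new information. So if you pursue that route you must actually verify that the normalization procedure can be chosen $\equiv I_2 \pmod{x}$ --- which, once you write it out, collapses to exactly the one-step diagonal conjugation the paper uses.
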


\begin{proof} Assume that $A(0)$ is similar to $\left[%
\begin{array}{cc}
  u & 1\\
  v & w \\
\end{array}%
\right]$, where $u\in 1+J(R)$, $v\in U(R)$, $w\in J(R)$. That is,
there exists a $P\in U\big(M_2(R;s)\big)$ such that $P^{-1}A(0)P=\left[%
\begin{array}{cc}
  u & 1\\
  v & w \\
\end{array}%
\right]$. Then $$P^{-1}A(x)P=\left[%
\begin{array}{cc}
  p(x) & 1+q(x)x\\
  r(x) & t(x) \\
\end{array}%
\right],$$

\noindent where $q(x)\in R[[x]]$, $p(x)\in 1+J\big(R[[x]]\big)$,
$r(x)\in U\big(R[[x]]\big)$ and $t(x)\in J\big(R[[x]]\big)$. Hence

$
\begin{array}{lll}
A(x)&\sim & \left[%
\begin{array}{cc}
  \big(1+q(x)x\big)^{-1} & 0\\
  0 & 1 \\
\end{array}%
\right]P^{-1}A(x)P\left[%
\begin{array}{cc}
  1+q(x)x & 0\\
  0 & 1 \\
\end{array}%
\right]\\
 &=&\left[%
\begin{array}{cc}
  \big(1+q(x)x\big)^{-1}p(x)\big(1+q(x)x\big) & 1\\
  r(x)\big(1+q(x)x\big) & t(x)\\
\end{array}%
\right]\\
\end{array}
$

\noindent Let $u(x)=\big(1+q(x)x\big)^{-1}p(x)\big(1+q(x)x\big)$,
$v(x)=r(x)\big(1+q(x)x\big)$ and $w(x)= t(x)$. It is easy to
verify that $u(x)\in 1+J\big(R[[x]]\big)$, $v(x)\in
U\big(R[[x]]\big)$, $w(x)\in J\big(R[[x]]\big)$ and $u(0)=u$,
$v(0)=v$, $w(0)=w$. If $A(0)$ is similar to $\left[%
\begin{array}{cc}
  w & 1\\
  v & u\\
\end{array}%
\right]$, where $u\in 1+J(R)$, $v\in U(R)$, $w\in J(R)$, then
we analogously show that $A(x)$ is similar to $\left[%
\begin{array}{cc}
  w(x) & 1\\
  v(x) & u(x)\\
\end{array}%
\right]$, where $u(x)\in 1+J\big(R[[x]]\big)$, $v(x)\in
U\big(R[[x]]\big)$, $w(x)\in J\big(R[[x]]\big)$ and $u(0)=u$,
$v(0)=v$, $w(0)=w$.
\end{proof}

\begin{thm}\label{power}Let $R$ be a weakly bleached local ring with
$s=\sum\limits_{i=1}^{\infty}s_ix^i\in C\big(R[[x]]\big)\cap
J\big(R[[x]]\big)$. The following are equivalent.
\begin{enumerate}
\item $A(x)\in M_2\big(R[[x]]; s\big)$ is strongly $J$-clean.
\item $A(0)\in M_2(R;s_0)$ is strongly $J$-clean.
\end{enumerate}
\end{thm}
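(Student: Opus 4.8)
The plan is to transfer information between $M_2\big(R[[x]];s\big)$ and $M_2(R;s_0)$ along the evaluation map $x\mapsto 0$, and for the nontrivial implication to feed the structure theorem for strongly $J$-clean elements, Theorem~\ref{temelteo1}, into a coefficientwise (Hensel-type) lifting of the roots of the quadratics occurring there; the weakly bleached hypothesis is precisely what makes the successive linear equations solvable. For $(1)\Rightarrow(2)$, observe that $f(x)\mapsto f(0)$ is a surjective ring homomorphism $\varepsilon\colon M_2\big(R[[x]];s\big)\to M_2(R;s_0)$ — the structure constant $s^2$ of the source is sent to $\big(s^2\big)(0)=s_0^2$, the structure constant of the target. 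A surjective ring homomorphism takes idempotents to idempotents, commuting pairs to commuting pairs, and $J\big(M_2(R[[x]];s)\big)$ into $J\big(M_2(R;s_0)\big)$, so $\varepsilon$ sends a strongly $J$-clean decomposition of $A(x)$ to one of $A(0)$.

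For $(2)\Rightarrow(1)$: $R[[x]]$ is again local, and since the coefficients of $s$ are central with $s_0=s(0)\in J(R)$ we have $s_0\in C(R)\cap J(R)$ and $s\in C\big(R[[x]]\big)\cap J\big(R[[x]]\big)$, so Theorem~\ref{temelteo1} applies to both $M_2(R;s_0)$ and $M_2\big(R[[x]];s\big)$. I would apply it to $A(0)$. If $A(0)\in J\big(M_2(R;s_0)\big)$ or $I_2-A(0)\in J\big(M_2(R;s_0)\big)$, then by Lemma~\ref{lem2} the diagonal entries of $A(x)$, resp. of $I_2-A(x)$, have constant terms in $J(R)$, hence lie in $J\big(R[[x]]\big)$, whence $A(x)\in J\big(M_2(R[[x]];s)\big)$, resp. $I_2-A(x)\in J\big(M_2(R[[x]];s)\big)$, and $A(x)$ is strongly $J$-clean. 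Otherwise $A(0)$ falls under case (3) or (4); say it is similar to $\left[\begin{array}{cc} u & 1\\ v & w\end{array}\right]$ with $u\in 1+J(R)$, $v\in U(R)$, $w\in J(R)$, the quadratic $t^2-(vuv^{-1}+w)t+(vuv^{-1}w-s_0^2v)$ has a right root in $1+J(R)$, and $t^2-(u+w)t+(wu-s_0^2v)$ has a right root in $J(R)$. By Lemma~\ref{chen}, $A(x)$ is then similar, in $M_2\big(R[[x]];s\big)$, to $\left[\begin{array}{cc} u(x) & 1\\ v(x) & w(x)\end{array}\right]$ with $u(x)\in 1+J\big(R[[x]]\big)$, $v(x)\in U\big(R[[x]]\big)$, $w(x)\in J\big(R[[x]]\big)$ and $u(0)=u$, $v(0)=v$, $w(0)=w$; by Theorem~\ref{temelteo1} it then suffices to produce a right root in $1+J\big(R[[x]]\big)$ of $t^2-\big(v(x)u(x)v(x)^{-1}+w(x)\big)t+\big(v(x)u(x)v(x)^{-1}w(x)-s^2v(x)\big)$ and a right root in $J\big(R[[x]]\big)$ of $t^2-\big(u(x)+w(x)\big)t+\big(w(x)u(x)-s^2v(x)\big)$, for then $A(x)$ satisfies condition (3) of Theorem~\ref{temelteo1}. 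The sub-case of similarity to $\left[\begin{array}{cc} w & 1\\ v & u\end{array}\right]$ is identical, with condition (4).

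These two quadratics specialize at $x=0$ to the ones already available, so everything comes down to lifting a right root, and this is the step I expect to be the main obstacle. Given a quadratic $t^2-p(x)t+q(x)$ with coefficients in $R[[x]]$ and a right root $\lambda_0\in R$ of $t^2-p(0)t+q(0)$, I would seek $\lambda(x)=\sum_{i\ge 0}\lambda_ix^i$ with $\lambda(x)^2-p(x)\lambda(x)+q(x)=0$; comparing coefficients of $x^n$ for $n\ge 1$ yields $\lambda_0\lambda_n+\lambda_n\lambda_0-p(0)\lambda_n=(\text{an element determined by }\lambda_0,\dots,\lambda_{n-1}\text{ and the coefficients of }p,q)$, i.e. $\big(l_{\lambda_0}-r_{p(0)-\lambda_0}\big)(\lambda_n)=(\text{known})$. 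For the first quadratic $p(0)=vuv^{-1}+w\in 1+J(R)$ and $\lambda_0\in 1+J(R)$, so $p(0)-\lambda_0\in J(R)$ and $l_{\lambda_0}-r_{p(0)-\lambda_0}$ is surjective because $R$ is weakly bleached; hence the $\lambda_n$ can be chosen recursively, $\lambda(x)$ exists, and $\lambda(0)\in 1+J(R)$ forces $\lambda(x)\in 1+J\big(R[[x]]\big)$. For the second quadratic $p(0)=u+w\in 1+J(R)$ and its root $\mu_0$ lies in $J(R)$, so the operator $l_{\mu_0}-r_{p(0)-\mu_0}$ has $\mu_0\in J(R)$ and $p(0)-\mu_0\in 1+J(R)$, again surjective by weak bleachedness, producing a right root $\mu(x)\in J\big(R[[x]]\big)$. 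This gives both root conditions for $A(x)$, so $A(x)$ is strongly $J$-clean. The delicate points are identifying exactly the operator $l_{\lambda_0}-r_{p(0)-\lambda_0}$ arising at each order — so that the weakly bleached hypothesis, which can fail over a general local ring, is exactly what is needed — and checking that the successive solutions really assemble into a power-series right root of the original quadratic; the argument under condition (4) runs in parallel.
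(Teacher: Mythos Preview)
Your argument is correct and follows essentially the same route as the paper: reduce via Theorem~\ref{temelteo1} and Lemma~\ref{chen} to lifting right roots of the two quadratics, then solve for the coefficients recursively using the weakly bleached hypothesis. One small slip: the coefficient equation $\lambda_0\lambda_n+\lambda_n\lambda_0-p(0)\lambda_n=(\text{known})$ rewrites as $\big(l_{p(0)-\lambda_0}-r_{\lambda_0}\big)(\lambda_n)=-(\text{known})$, not $\big(l_{\lambda_0}-r_{p(0)-\lambda_0}\big)(\lambda_n)=(\text{known})$; fortunately the weakly bleached hypothesis covers both $l_a-r_b$ and $l_b-r_a$, so your conclusion is unaffected.
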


\begin{proof} $(1)\Rightarrow(2)$ It is clear.

$(2)\Rightarrow(1)$ Obviously, $R[[x]]$ is local. If $A(0)\in
J\big(M_2(R;s)\big)$, then $A(x)$ is strongly $J$-clean. If
$I_2-A(0)\in J\big(M_2(R;s)\big)$, then $A(x)$ is strongly
$J$-clean. Since $A(0)\in M_2(R;s_0)$ is strongly $J$-clean, by
Theorem~\ref{temelteo1}, $A(0)$ is similar to $\left[%
\begin{array}{cc}
  u & 1\\
  v & w \\
\end{array}%
\right]$ or $\left[%
\begin{array}{cc}
  w & 1\\
  v & u\\
\end{array}%
\right]$, where $u\in 1+J(R)$, $v\in U(R)$, $w\in J(R)$. Assume
that $A(0)$ is similar to $\left[%
\begin{array}{cc}
  u & 1\\
  v & w \\
\end{array}%
\right]$. Then, by Lemma~\ref{chen}, $A(x)$ is similar to $\left[%
\begin{array}{cc}
  u(x) & 1\\
  v(x) & w(x) \\
\end{array}%
\right]$, where $u(x)\in 1+J\big(R[[x]]\big)$, $v(x)\in
U\big(R[[x]]\big)$, $w(x)\in J\big(R[[x]]\big)$ and $u(0)=u$,
$v(0)=v$, $w(0)=w$. Further, by Theorem~\ref{temelteo1}, the
equation $t^2 -(vuv^{-1} +w)t +(vuv^{-1}w -s^2v)$ has a right root
$\alpha\in 1+J(R)$ and $t^2 -(u+w)t +(wu-s^2v)$ has a right root
$\beta\in J(R)$. Write $r=\sum\limits_{i=0}^{\infty}r_ix^i\in
R[[x]]$. we have only to show that the equation $t^2
-\big(v(x)u(x)v(x)^{-1} +w(x)\big)t +\big(v(x)u(x)v(x)^{-1}w(x)
-s^2v(x)\big)$ has a right root in $1+J\big(R[[x]]\big)$ and $t^2
-\big(u(x)+w(x)\big)t +\big(w(x)u(x)-s^2v(x)\big)$ has a right
root in $J\big(R[[x]]\big)$. Hence, $r^2 -\big(u(x)+w(x)\big)r
+\big(w(x)u(x)-s^2v(x)\big)=0$ holds if the following hold:

$$\begin{array}{r}
r_0^2-(u+w)r_0+(wu-s_0^2v)=0,\\
(r_0r_1+r_1r_0)-(u+w)r_1-(u_1+w_1)r_0=\ast,\\
(r_0r_2+r_1^2+r_2r_0)-(u+w)r_2-(u_1+w_1)r_1-(u_2+w_2)r_0=\ast,\\
\vdots
\end{array}$$

\noindent Let $r_0=\alpha$. Since $R$ is a weakly bleached local
ring, there exists some $r_1\in R$ such that
$$r_1r_0-(u+w-r_0)r_1=\ast.$$ Further, there exists some $r_2\in R$ such that
$$r_2r_0-(u+w-r_0)r_2=\ast.$$ By
iteration of this process, we get $r_3,r_4,\cdots $. Then
$t^2-\big(u(x)+w(x)\big)t +\big(w(x)u(x)-s^2v(x)\big)$ has a right
root $r\in J\big(R[[x]]\big)$. Let
$v(x)u(x)v(x)^{-1}=\sum\limits_{i=0}^{\infty}\lambda_ix^i$ and
$y=\sum\limits_{i=0}^{\infty}y_ix^i$. Thus, the equation $y^2
-\big(v(x)u(x)v(x)^{-1}+w(x)\big)y +\big(v(x)u(x)v(x)^{-1}w(x)
-s^2v(x)\big)=0$ holds if the following hold:

$$\begin{array}{r}
y_0^2-(vuv^{-1}+w)y_0+(vuv^{-1}w-s_0^2v)=0,\\
(y_0y_1+y_1y_0)-(vuv^{-1}+w)y_1-(\lambda_1+w_1)y_0=\ast,\\
(y_0y_2+y_1^2+y_2y_0)-(vuv^{-1}+w)y_2-(\lambda_1+w_1)y_1-(\lambda_2+w_2)y_0=\ast,\\
\vdots
\end{array}$$

\noindent Let $y_0=\beta$. Since $R$ is a weakly bleached local
ring, there exists some $y_1\in R$ such that
$$y_1y_0-(vuv^{-1}+w-y_0)y_1=\ast.$$ Further, there exists some $y_2\in R$ such that
$$y_2y_0-(vuv^{-1}+w-y_0)y_2=\ast.$$ By
iteration of this process, we get $y_3,y_4,\cdots $. This gives
that the equation $t^2 -\big(v(x)u(x)v(x)^{-1} +w(x)\big)t
+\big(v(x)u(x)v(x)^{-1}w(x) -s^2v(x)\big)$ has a right root $y\in
1+J\big(R[[x]]\big)$. In view of Theorem~\ref{temelteo1}, $A(x)$
is strongly $J$-clean in $M_2\big(R[[x]]; s\big)$.
\end{proof}

\begin{cor}\label{power2} Let $R$ be a weakly bleached local ring with
$s=\sum\limits_{i=1}^{\infty}s_ix^i\in C\big(R[[x]]\big)\cap
J\big(R[[x]]\big)$. The following are equivalent.
\begin{enumerate}
\item $A(x)\in M_2\big(R[[x]]; s\big)$ is strongly clean.
\item $A(0)\in M_2(R;s_0)$ is strongly clean.
\end{enumerate}
\end{cor}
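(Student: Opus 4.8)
The plan is to reduce the statement about strong cleanness to the already-proved statement about strong $J$-cleanness (Theorem~\ref{power}), using Theorem~\ref{teo1} to bridge the two notions. Recall that $R[[x]]$ is local with $J\big(R[[x]]\big)=J(R)+xR[[x]]$, and that an element of a local ring is a unit precisely when it is not in the Jacobson radical; since $s\in J\big(R[[x]]\big)$, Lemma~\ref{lem2} gives that $A(x)\in U\big(M_2(R[[x]];s)\big)$ iff both diagonal entries of $A(x)$ are units of $R[[x]]$, which in turn holds iff both diagonal entries of $A(0)$ are units of $R$, i.e.\ iff $A(0)\in U\big(M_2(R;s_0)\big)$. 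The same equivalence, applied to $I_2-A(x)$, shows $I_2-A(x)\in U\big(M_2(R[[x]];s)\big)$ iff $I_2-A(0)\in U\big(M_2(R;s_0)\big)$.

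For $(1)\Rightarrow(2)$: the evaluation-at-$0$ map $M_2\big(R[[x]];s\big)\to M_2(R;s_0)$ is a ring homomorphism carrying idempotents to idempotents, commuting pairs to commuting pairs, and units to units, so it carries strongly clean elements to strongly clean elements; hence $A(0)$ is strongly clean. For $(2)\Rightarrow(1)$: assume $A(0)$ is strongly clean in $M_2(R;s_0)$. By Theorem~\ref{teo1}, one of three cases holds. If $A(0)\in U\big(M_2(R;s_0)\big)$ or $I_2-A(0)\in U\big(M_2(R;s_0)\big)$, then by the observation in the first paragraph $A(x)\in U\big(M_2(R[[x]];s)\big)$ or $I_2-A(x)\in U\big(M_2(R[[x]];s)\big)$, and in either case $A(x)$ is strongly clean in $M_2\big(R[[x]];s\big)$ by Lemma~\ref{lem8}. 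Otherwise $A(0)$ is strongly $J$-clean in $M_2(R;s_0)$; then by Theorem~\ref{power}, $A(x)$ is strongly $J$-clean in $M_2\big(R[[x]];s\big)$, and a strongly $J$-clean element is in particular strongly clean (every element of $1+J$ is a unit), so $A(x)$ is strongly clean. This completes the proof.

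I expect the only real point requiring care — and the sole place where the hypothesis $s\in J\big(R[[x]]\big)$ and the weakly bleached hypothesis enter — is the invocation of Theorem~\ref{power} in the last case; everything else is the bookkeeping of the first paragraph (transporting unit/non-unit status between $A(x)$ and $A(0)$, which uses only that $s$ lies in the radical and the local structure of $R[[x]]$) together with the trichotomy of Theorem~\ref{teo1}. No new estimates or constructions are needed, so the corollary is genuinely immediate once Theorem~\ref{power} and Theorem~\ref{teo1} are in hand.
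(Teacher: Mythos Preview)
Your proof is correct and follows essentially the same approach as the paper's: both use the trichotomy of Theorem~\ref{teo1} on $A(0)$, lift unit status from $A(0)$ to $A(x)$ in the first two cases, and invoke Theorem~\ref{power} in the strongly $J$-clean case. You supply more detail than the paper does (the explicit justification via Lemma~\ref{lem2} that $A(x)$ is a unit iff $A(0)$ is, and the evaluation-homomorphism argument for $(1)\Rightarrow(2)$), but the structure is identical.
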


\begin{proof} $(1)\Rightarrow(2)$ is clear.

$(2)\Rightarrow(1)$ By Theorem~\ref{teo1}, we consider the
following cases:

\begin{itemize}
    \item[(i)] $A(0)\in U\big(M_2(R;s)\big)$,
    \item[(ii)] $I_2-A(0)\in U\big(M_2(R;s)\big)$,
    \item[(iii)] $A(0)$ is strongly $J$-clean.
\end{itemize}

\noindent If $A(0)\in U\big(M_2(R;s)\big)$, then $A(x)$ is
strongly clean in $M_2\big(R[[x]];s\big)$. If $I_2-A(0)\in
U\big(M_2(R;s)\big)$, then $A(x)\in M_2\big(R[[x]];s\big)$ is
strongly clean. If $A(0)$ is strongly $J$-clean, then $A(x)$ is
strongly $J$-clean by Theorem~\ref{power}. Hence $A(x)$ is
strongly clean in $M_2\big(R[[x]];s\big)$.
\end{proof}

\section{Formal matrix rings over a commutative local ring}

The aim of this section is to investigate the strong $J$-cleanness
of a single element of $M_2(R;s)$ for a commutative local ring $R$
with $s\in R$. Our results yield the main results of \cite{C1}
when specializing to $s=1$.

If $R$ is a commutative ring with $s\in R$ and $A=\left[%
\begin{array}{cc}
 a & b\\
 c & d \\
\end{array}%
\right]\in M_2(R;s)$, we define $det_s(A)=ad-s^2bc$ and
$tr(A)=a+d$,
and $rA=\left[%
\begin{array}{cc}
 ra & rb\\
 rc & rd\\
\end{array}%
\right]$ for $r\in R$ (see \cite{TZ2}).

\begin{lem}\label{temel2}\cite[Proposition 32]{TZ2} Let $R$ be a commutative ring with $s\in R$ and let $A, B\in M_2(R;s)$. The following hold:
\begin{itemize}
\item[(1)] $det_s(AB)=det_s(A)det_s(B)$.
\item[(2)] $A\in U\big(M_2(R;s)\big)$ if and only if $det_s(A)\in
U(R)$. In this case, if $A=\left[%
\begin{array}{cc}
 a & b\\
 c & d \\
\end{array}%
\right]$, then $A^{-1}= det_s(A)^{-1}\left[%
\begin{array}{cc}
 d & -b\\
 -c & a \\
\end{array}%
\right]$.
\item[(3)] If $A\sim B$, then $det_s(A)=det_s(B)$ and $tr(A)=tr(B)$.
\end{itemize}
\end{lem}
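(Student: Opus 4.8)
The plan is to prove the three parts in order, reducing (2) and (3) to (1) together with two elementary identities that hold because $R$ is commutative.

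For part (1), I would multiply out directly. With $A=\left[\begin{smallmatrix} a & b\\ c & d\end{smallmatrix}\right]$ and $B=\left[\begin{smallmatrix} a' & b'\\ c' & d'\end{smallmatrix}\right]$, the defining multiplication gives $AB=\left[\begin{smallmatrix} aa'+s^2bc' & ab'+bd'\\ ca'+dc' & s^2cb'+dd'\end{smallmatrix}\right]$, so $det_s(AB)=(aa'+s^2bc')(s^2cb'+dd')-s^2(ab'+bd')(ca'+dc')$. Expanding both products and using commutativity of $R$, the mixed terms $s^2aa'cb'$ and $s^2bc'dd'$ cancel against the corresponding terms coming from $s^2(ab'+bd')(ca'+dc')$, and what remains is $ada'd'-s^2ad\,b'c'-s^2bca'd'+s^4bcb'c'=(ad-s^2bc)(a'd'-s^2b'c')=det_s(A)det_s(B)$. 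This bookkeeping is the most laborious step of the whole lemma, but it is purely mechanical once one notes that every factor commutes.

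For part (2), I would first verify the claimed inverse: since $R$ is commutative,
\[\left[\begin{smallmatrix} a & b\\ c & d\end{smallmatrix}\right]\left[\begin{smallmatrix} d & -b\\ -c & a\end{smallmatrix}\right]=\left[\begin{smallmatrix} ad-s^2bc & 0\\ 0 & ad-s^2bc\end{smallmatrix}\right]=det_s(A)\,I_2,\]
and the product in the opposite order is the same, so if $det_s(A)\in U(R)$ then $A$ is invertible with $A^{-1}=det_s(A)^{-1}\left[\begin{smallmatrix} d & -b\\ -c & a\end{smallmatrix}\right]$. Conversely, if $A\in U\big(M_2(R;s)\big)$, apply part (1) to $AA^{-1}=I_2$ to get $det_s(A)\,det_s(A^{-1})=det_s(I_2)=1$, whence $det_s(A)\in U(R)$.

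For part (3), write $B=P^{-1}AP$ with $P\in U\big(M_2(R;s)\big)$. By part (1) and commutativity, $det_s(B)=det_s(P^{-1})\,det_s(A)\,det_s(P)=det_s(A)$, since $det_s(P^{-1})det_s(P)=det_s(I_2)=1$. For the trace, I would first record the identity $tr(XY)=tr(YX)$ for all $X,Y\in M_2(R;s)$: inspection of the diagonal entries of $XY$ and $YX$ shows both traces equal $aa'+dd'+s^2(bc'+cb')$, using commutativity. Since $M_2(R;s)$ is associative, it then follows that $tr(B)=tr\big((P^{-1}A)P\big)=tr\big(P(P^{-1}A)\big)=tr\big((PP^{-1})A\big)=tr(A)$. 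The only point requiring a little care is this twisted trace identity, but it is immediate from the explicit formula for the product.
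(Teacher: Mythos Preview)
Your argument is correct in all three parts. The expansion in (1) checks out, the adjugate computation in (2) is right (note that commutativity is genuinely needed for the off-diagonal entries $-ab+ba$ and $cd-dc$ to vanish), and the trace identity $tr(XY)=tr(YX)$ in (3) is exactly what is required.

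As for comparison: the paper does not actually supply a proof of this lemma. It is stated with a citation to \cite[Proposition~32]{TZ2} and used as a black box thereafter. Your direct verification is the natural way to establish the result and is presumably what appears in the cited source; there is nothing to add or correct.
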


Note that $det_s(I_2-A)=1-tr(A)+det_s(A)$ for any $A\in M_2(R;s)$.

\begin{prop}\label{prop1} Let $R$ be a commutative ring with $s\in R$ and let $A\in M_2(R;s)$
with $A\notin J\big(M_2(R;s)\big)$. If $det_s(A)\in J(R)$ and
$tr(A)\in J(R)$, then $A$ is not strongly $J$-clean in $M_2(R;s)$.
\end{prop}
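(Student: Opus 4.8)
The plan is to argue by contradiction, using the structure theorem for strongly $J$-clean elements (Lemma~\ref{lem7}) together with the multiplicativity of $det_s$ and the invariance of $det_s$ and $tr$ under similarity (Lemma~\ref{temel2}). Suppose $A$ is strongly $J$-clean but $A\notin J\big(M_2(R;s)\big)$. First I would dispose of the case $I_2-A\in J\big(M_2(R;s)\big)$: then $tr(A)=tr(I_2)-tr(I_2-A)\in 2+J(R)$, and since $tr(A)\in J(R)$ by hypothesis this forces $2\in J(R)$; but also $det_s(A)=det_s\big(I_2-(I_2-A)\big)=1-tr(I_2-A)+det_s(I_2-A)\in 1+J(R)$, contradicting $det_s(A)\in J(R)$. (In fact $1\in J(R)$ is already absurd, so this case is eliminated outright.) Hence by Lemma~\ref{lem7} we are in case (3) or case (4), and in either case $A$ is similar to a diagonal matrix $\operatorname{diag}(v,w)$ (or $\operatorname{diag}(w,v)$) with $v\in 1+J(R)$ and $w\in J(R)$.

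The key step is then to compute the two similarity invariants of this diagonal matrix. By Lemma~\ref{temel2}(3) we have $tr(A)=v+w$ and $det_s(A)=vw$ (in the formal matrix ring $M_2(R;s)$, for a diagonal matrix $\operatorname{diag}(v,w)$ one has $det_s=vw-s^2\cdot 0\cdot 0=vw$ and $tr=v+w$, regardless of which order). Since $w\in J(R)$ and $v\in 1+J(R)\subseteq U(R)$, we get $tr(A)=v+w\in 1+J(R)$, so $tr(A)$ is a unit. This directly contradicts the hypothesis $tr(A)\in J(R)$ (as $1\notin J(R)$ in a nonzero ring, and $R$ being local is nonzero). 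Therefore $A$ cannot be strongly $J$-clean.

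I do not expect any real obstacle here; the only point needing a little care is making sure the structure theorem Lemma~\ref{lem7} is being applied correctly in both the $s\in U(R)$ and $s\in J(R)$ subcases, and that the trace and $det_s$ of a diagonal matrix are computed in the twisted ring $M_2(R;s)$ rather than in $M_2(R)$ — but since the off-diagonal entries vanish, the twisting parameter $s$ plays no role in these particular invariants. One could alternatively phrase the whole argument using $det_s(I_2-A)=1-tr(A)+det_s(A)$: the hypotheses give $det_s(I_2-A)\in 1+J(R)$, hence $I_2-A\in U\big(M_2(R;s)\big)$ by Lemma~\ref{temel2}(2), and then combine with $A\notin J\big(M_2(R;s)\big)$ and Theorem~\ref{teo1}/Lemma~\ref{lem7} to force $A$ into the diagonal case and derive the same contradiction from $tr(A)=v+w\in 1+J(R)$. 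Either route is short; I would present the first.
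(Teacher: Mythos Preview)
There is a genuine gap. The proposition is stated for an arbitrary \emph{commutative} ring $R$, but Lemma~\ref{lem7} (and likewise Theorem~\ref{teo1}) require $R$ to be \emph{local}. Your primary argument invokes Lemma~\ref{lem7} to reduce to the diagonal case, and your alternative route invokes it again after establishing that $I_2-A$ is a unit; neither appeal is licensed under the stated hypotheses. (Your parenthetical ``$R$ being local is nonzero'' also shows you are tacitly reading the hypothesis as local.) If you add the local hypothesis your argument goes through, but then you have proved a weaker statement than the one claimed.

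The paper's proof avoids the structure theorem entirely and works for any commutative ring. From $det_s(I_2-A)=1-tr(A)+det_s(A)\in 1+J(R)\subseteq U(R)$ one obtains $I_2-A\in U\big(M_2(R;s)\big)$ by Lemma~\ref{temel2}(2). Since $A$ is assumed strongly $J$-clean, so is $I_2-A$ by Example~\ref{ex1}(3), and then Lemma~\ref{lem4} applied to the unit $I_2-A$ forces $(I_2-A)-I_2=-A\in J\big(M_2(R;s)\big)$, contradicting $A\notin J\big(M_2(R;s)\big)$. Your alternative route in fact reaches the key observation $I_2-A\in U\big(M_2(R;s)\big)$; the only thing missing is to apply Lemma~\ref{lem4} directly at that point instead of detouring back through Lemma~\ref{lem7}.
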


\begin{proof} Assume that $A\in M_2(R;s)$ is strongly $J$-clean. By
Example~\ref{ex1}, $I_2-A$ is strongly $J$-clean in $M_2(R;s)$. By
the remark above, $det_s(I_2-A)=1-tr(A)+det_s(A)\in U(R)$ and so
$I_2-A\in U\big(M_2(R;s)\big)$ by Lemma~\ref{temel2}(2). According
to Lemma~\ref{lem4}, $(I_2-A)-I_2=-A\in J\big(M_2(R;s)\big)$,
which contradicts the fact that $A\notin J\big(M_2(R;s)\big)$.
\end{proof}



\begin{prop}\label{prop0} Let $R$ be a commutative local ring with $s\in R$ and let $A\in M_2(R;s)$.
Then $det_s(A), tr(A)\in J(R)$ and $A$ is strongly $J$-clean if
and only if $A\in J\big(M_2(R;s)\big)$.
\end{prop}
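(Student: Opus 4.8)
The plan is to prove the equivalence by establishing the nontrivial direction first: assuming $A$ is strongly $J$-clean together with $\det_s(A), \operatorname{tr}(A)\in J(R)$, deduce $A\in J\big(M_2(R;s)\big)$; the reverse implication is immediate since every element of $J\big(M_2(R;s)\big)$ is strongly $J$-clean (Example~\ref{ex1}) and $\det_s$, $\operatorname{tr}$ both land in $J(R)$ when $A$ has all entries constrained by Theorem~\ref{teoor1}. So the work is the forward direction. I would split on whether $s\in U(R)$ or $s\in J(R)$, since the structure of idempotents and units in $M_2(R;s)$ differs between these cases (compare Lemma~\ref{idempotent} and Lemma~\ref{lem2}).

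First suppose $s\in U(R)$. Then $M_2(R;s)\cong M_2(R)$ by \cite[Proposition 4]{TZ2}, and $\det_s$, $\operatorname{tr}$ are just the usual determinant and trace (up to the unit $s^2$, which is harmless). Apply Lemma~\ref{lem7}: a strongly $J$-clean $A$ either lies in $J\big(M_2(R;s)\big)$ (done), or satisfies $I_2-A\in J\big(M_2(R;s)\big)$, or is similar to $\operatorname{diag}(v,w)$ with $v\in 1+J(R)$, $w\in J(R)$. In the second case $\operatorname{tr}(A)=\operatorname{tr}(I_2)-\operatorname{tr}(I_2-A)\in 2+J(R)$, which is a unit since $2\in U(R)$ or $2\in J(R)$... here I must be careful: if $2\in J(R)$ then $\operatorname{tr}(A)$ could be in $J(R)$. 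So instead use $\det_s$: $\det_s(A)=\det_s(I_2)-(\text{terms})$; more cleanly, $I_2-A\in J$ forces $A\in U\big(M_2(R;s)\big)$, so $\det_s(A)\in U(R)$, contradicting $\det_s(A)\in J(R)$. In the third (diagonal) case, $\operatorname{tr}(A)=v+w\in 1+J(R)\subseteq U(R)$ by Lemma~\ref{temel2}(3), again contradicting $\operatorname{tr}(A)\in J(R)$. Hence only the first case survives and $A\in J\big(M_2(R;s)\big)$.

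Now suppose $s\in J(R)$. Again invoke Lemma~\ref{lem7}: $A\in J$ (done), or $I_2-A\in J$, or ($A\sim\operatorname{diag}(v,w)$ or $A\sim\operatorname{diag}(w,v)$) with $v\in 1+J(R)$, $w\in J(R)$. If $I_2-A\in J\big(M_2(R;s)\big)$, then by Lemma~\ref{lem2} the diagonal entries of $I_2-A$ lie in $J(R)$, so the diagonal entries $a,d$ of $A$ lie in $1+J(R)$, hence $A\in U\big(M_2(R;s)\big)$ by Lemma~\ref{lem2}; then $\det_s(A)\in U(R)$ by Lemma~\ref{temel2}(2), contradicting $\det_s(A)\in J(R)$. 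In the diagonal case, $\operatorname{tr}(A)=v+w$, and since $v\in 1+J(R)$, $w\in J(R)$ we get $\operatorname{tr}(A)\in 1+J(R)\subseteq U(R)$, again contradicting $\operatorname{tr}(A)\in J(R)$ via Lemma~\ref{temel2}(3). So once more $A\in J\big(M_2(R;s)\big)$, completing the forward direction.

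For the converse, if $A\in J\big(M_2(R;s)\big)$ then $A$ is strongly $J$-clean by Example~\ref{ex1}(1); moreover by Theorem~\ref{teoor1} (or Lemma~\ref{lem2}) every entry of $A$ lies in $J(R)$ when $s\in J(R)$, while if $s\in U(R)$ then $J\big(M_2(R;s)\big)=M_2\big(J(R)\big)$, so in all cases $\det_s(A)=ad-s^2bc$ and $\operatorname{tr}(A)=a+d$ are sums and products involving elements of $J(R)$ (an ideal), hence lie in $J(R)$. I expect the only delicate point to be handling the case $I_2-A\in J\big(M_2(R;s)\big)$ uniformly — the safe move there is to pass through invertibility of $A$ and use $\det_s(A)\in U(R)$ rather than arguing about the trace, since $2$ may fail to be a unit. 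Everything else is a direct application of Lemma~\ref{lem7}, Lemma~\ref{lem2}, and Lemma~\ref{temel2}.
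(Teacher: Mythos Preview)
Your argument is correct, but it takes a different route from the paper's. You lean entirely on the classification in Lemma~\ref{lem7} and eliminate the non-$J$ alternatives by showing they force $\det_s(A)$ or $\operatorname{tr}(A)$ to be a unit. The paper instead argues more directly: for $s\in U(R)$ it observes that $\det_s(A),\operatorname{tr}(A)\in J(R)$ make the $s$-characteristic polynomial of $\overline{A}$ equal to $t^2$, so Cayley--Hamilton (\cite[Theorem 34]{TZ2}) gives $A^2\in J\big(M_2(R;s)\big)$; then from $A=E+V$ with $EV=VE$ one gets $E=(A-V)^2\in J$, forcing $E=0$. For $s\in J(R)$ the paper's argument is even shorter: since $s^2bc\in J(R)$, the hypotheses $\det_s(A),\operatorname{tr}(A)\in J(R)$ alone yield $ad,a+d\in J(R)$, hence $a,d\in J(R)$, so $A\in J\big(M_2(R;s)\big)$ without ever invoking the strongly $J$-clean hypothesis. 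This last point is precisely the content of the Remark following the proposition, and your approach via Lemma~\ref{lem7} obscures it. On the other hand, your method is pleasantly uniform across the two cases and avoids the external citation to Cayley--Hamilton.

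One small slip in your converse: when $s\in J(R)$ it is \emph{not} true that every entry of $A\in J\big(M_2(R;s)\big)$ lies in $J(R)$ (the off-diagonal entries are unconstrained by Lemma~\ref{lem2}); what you need is only $a,d\in J(R)$ together with $s\in J(R)$, which still gives $\det_s(A)=ad-s^2bc\in J(R)$. Your conclusion stands, but the sentence should be corrected.
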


\begin{proof} ``$\Rightarrow:$ " Let $A=\left[%
\begin{array}{cc}
  a & b \\
  c & d \\
\end{array}%
\right]$ and we will write $\overline{r}=r+R$ and $\overline{R}=R/J(R)$. If $s\in U(R)$, then $J\big(M_2(R;s)\big)=\left[%
\begin{array}{cc}
  J(R)& J(R) \\
  J(R) & J(R) \\
\end{array}%
\right]$ and so we have $J\big(M_2(R;s)\big)=M_2\big(J(R);s\big)$.
Since $det_s(A)=ad-s^2bc\in J(R)$ and $tr(A)=a+d\in J(R)$, the
$s$-characteristic polynomial of $\overline{A}\in
M_2\big(R/J(R);\overline{s}\big)$ is simply $t^2$. By
\cite[Theorem 34]{TZ2}, $\overline{A}^2=\overline{0}$ and so
$A^2\in J\big(M_2(R;s)\big)$. On the other hand, as $A$ is
strongly $J$-clean in $M_2(R;s)$, there exists an $E^2=E\in
M_2(R;s)$ and $V\in J\big(M_2(R;s)\big)$ such that $A=E+V$ and
$EV=VE$. Then $(A-V)^2=A^2-2AV+V^2=E\in J\big(M_2(R;s)\big)$ and
so $E=0\in M_2(R;s)$. Hence $A\in J\big(M_2(R;s)\big)$. If $s\in
J(R)$, then $J\big(M_2(R;s)\big)=\left[%
\begin{array}{cc}
  J(R)& R \\
  R & J(R) \\
\end{array}%
\right]$. This gives that $ad\in J(R)$ and $a+d\in J(R)$. Hence
$a,d\in J(R)$. Therefore $A\in J\big(M_2(R;s)\big)$.

``$\Leftarrow:$ " Let $A=\left[%
\begin{array}{cc}
  a & b \\
  c & d \\
\end{array}%
\right]$. Clearly, $A\in M_2(R;s)$ is strongly $J$-clean. If $s\in
U(R)$, then $a,b,c,d\in J(R)$ and so $det_s(A), tr(A)\in J(R)$. If
$s\in J(R)$, then $a,d\in J(R)$. This implies that $det_s(A),
tr(A)\in J(R)$.
\end{proof}

\begin{rem} \rm{In Proposition~\ref{prop0}, it was proved that $det_s(A), tr(A)\in J(R)$ if
and only if $A\in J\big(M_2(R;s)\big)$ for any $s\in J(R)$.}
\end{rem}

\begin{lem} \label{gerekli} Let $R$ be a commutative local ring with $s\in R$. Then every upper triangular matrix in
$M_2(R;s)$ is strongly clean.
\end{lem}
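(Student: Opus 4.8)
The plan is to write an upper triangular element of $M_2(R;s)$ as $A=\left[\begin{array}{cc} a & b\\ 0 & d\end{array}\right]$ with $a,b,d\in R$, and to split into cases according to which of $a,d$ are units (recall that in the local ring $R$ each of $a,d$ is either a unit or lies in $J(R)$). The goal in each case is to land in one of the three alternatives of Corollary~\ref{faydali}, i.e. to show that $A$ is a unit, or that $I_2-A$ is a unit, or that $A$ is similar to a diagonal matrix.

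\textbf{Case 1: $a,d\in U(R)$.} Here $det_s(A)=ad-s^2\cdot b\cdot 0=ad\in U(R)$, so $A\in U\big(M_2(R;s)\big)$ by Lemma~\ref{temel2}(2); thus alternative (1) of Corollary~\ref{faydali} holds. \textbf{Case 2: $a,d\in J(R)$.} Then $1-a,1-d\in U(R)$, so $det_s(I_2-A)=1-tr(A)+det_s(A)=(1-a)(1-d)\in U(R)$, whence $I_2-A\in U\big(M_2(R;s)\big)$ by Lemma~\ref{temel2}(2) and alternative (2) holds. \textbf{Case 3: exactly one of $a,d$ is a unit.} Then $a-d\in U(R)$, since in the residue field $R/J(R)$ the image of $a-d$ is nonzero. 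I would then conjugate by the unitriangular matrix $P=\left[\begin{array}{cc} 1 & -b(a-d)^{-1}\\ 0 & 1\end{array}\right]$, which lies in $U\big(M_2(R;s)\big)$ because $det_s(P)=1$, with $P^{-1}=\left[\begin{array}{cc} 1 & b(a-d)^{-1}\\ 0 & 1\end{array}\right]$ by Lemma~\ref{temel2}(2). A short computation gives $P^{-1}AP=\left[\begin{array}{cc} a & 0\\ 0 & d\end{array}\right]$: the $s^2$-terms in the formal products drop out because the $(2,1)$-entry of $A$ is zero, and the choice $-b(a-d)^{-1}$ is exactly what annihilates the $(1,2)$-entry. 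Hence $A$ is similar to a diagonal matrix (similarity being symmetric) and alternative (3) of Corollary~\ref{faydali} holds. In all three cases $A$ is strongly clean, which proves the lemma.

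I do not expect a genuine obstacle: the only computation is checking that $P^{-1}AP$ is diagonal in Case 3, which is immediate once one notices that upper triangular matrices in $M_2(R;s)$ multiply exactly as in the ordinary ring $T_2(R)$ (every $s^2$-contribution involves the vanishing $(2,1)$-entry). In fact one could alternatively phrase the whole argument by observing that the upper triangular matrices form a subring of $M_2(R;s)$ isomorphic to $T_2(R)$, whose units and idempotents remain units and idempotents of $M_2(R;s)$ (a unit of $T_2(R)$ has invertible determinant, which coincides with its $det_s$), and then invoking strong cleanness of $T_2(R)$ over the commutative local ring $R$; but the three-case argument above is entirely self-contained given the earlier results of this section.
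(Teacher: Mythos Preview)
Your proof is correct and follows essentially the same approach as the paper: both arguments reduce to the case where $a-d$ (the paper writes $c-a$) is a unit and then conjugate by the upper unitriangular matrix $\left[\begin{smallmatrix}1 & \pm b(a-d)^{-1}\\ 0 & 1\end{smallmatrix}\right]$ to diagonalize $A$, invoking Corollary~\ref{faydali}. The only cosmetic difference is the organization of the preliminary reduction---you split according to which of $a,d$ lie in $U(R)$, whereas the paper first disposes of the cases $A\in U\big(M_2(R;s)\big)$ and $I_2-A\in U\big(M_2(R;s)\big)$ via Lemma~\ref{lem8}---but the substantive step is identical.
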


\begin{proof} Let $A=\left[%
\begin{array}{cc}
 a & b\\
 0 & c \\
\end{array}%
\right]\in M_2(R;s)$ where $a,b,c\in R$. We can assume that
$det_s(A)=ac\in J(R)$ and $tr(A)=a+c\in U(R)$ by
Lemma~\ref{lem8}. This gives $c-a\in U(R)$. Choose $P=\left[%
\begin{array}{cc}
 1 & b(c-a)^{-1}\\
 0 & 1 \\
\end{array}%
\right]$. By Lemma~\ref{temel2}, $P\in U\big(M_2(R;s)\big)$,
and a direct calculation shows that $P^{-1}AP=\left[%
\begin{array}{cc}
 a & 0\\
 0 & c \\
\end{array}%
\right]$. Then $A$ is strongly clean from Corollary~\ref{faydali}.
\end{proof}

\begin{thm}\label{teo3} Let $R$ be a commutative local ring with $s\in R$.
Then $A\in M_2(R;s)$ is strongly $J$-clean if and only if one of
the following holds:
\begin{itemize}
\item[(1)] $A\in J\big(M_2(R;s)\big)$,
\item[(2)] $I_2-A\in J\big(M_2(R;s)\big)$,
\item[(3)] The $s$-characteristic polynomial $t^2-tr(A)t+det_s(A)=0$ has a root in
$J(R)$ and a root in $1+J(R)$.
\end{itemize}
\end{thm}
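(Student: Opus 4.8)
The plan is to split into the two cases $s\in U(R)$ and $s\in J(R)$, since these are the only possibilities for a local ring, and to reduce each to the classification lemmas already proved. For the forward direction, suppose $A$ is strongly $J$-clean and neither $A$ nor $I_2-A$ lies in $J(M_2(R;s))$. Then $A$ is strongly clean (trivially), and by Theorem~\ref{teo1} together with Lemma~\ref{lem4} and Example~\ref{ex1}(3), $A$ cannot be a unit and $I_2-A$ cannot be a unit (a unit strongly $J$-clean element would force $A$ or $I_2-A$ into $J$, a contradiction). So $A,I_2-A\notin U(M_2(R;s))$, and I am in position to apply the structure lemmas.

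\medskip

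\noindent\textbf{Case $s\in J(R)$.} By Theorem~\ref{temelteo1}, $A$ is similar to $\left[\begin{smallmatrix} u & 1\\ v & w\end{smallmatrix}\right]$ or $\left[\begin{smallmatrix} w & 1\\ v & u\end{smallmatrix}\right]$ with $u\in 1+J(R)$, $v\in U(R)$, $w\in J(R)$, and the relevant quadratics have roots as stated there. Now I exploit commutativity: $R$ commutative forces $vuv^{-1}=u$, so in the first case the two equations of Theorem~\ref{temelteo1}(3) both become $t^2-(u+w)t+(uw-s^2v)=0$, i.e. the $s$-characteristic polynomial $t^2-tr(A)t+det_s(A)$ (using Lemma~\ref{temel2}(3) to see $tr$ and $det_s$ are similarity invariants), and Theorem~\ref{temelteo1} tells us it has a root in $1+J(R)$ and a root in $J(R)$; symmetrically in the second case. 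That yields (3). For the converse in this case, if (3) holds then $t^2-tr(A)t+det_s(A)$ factors as $(t-\alpha)(t-\beta)$ with $\alpha\in 1+J(R)$, $\beta\in J(R)$ (a monic quadratic over a commutative local ring with a root splits completely, and the complementary root is the other one by Vieta since $\alpha+\beta=tr(A)$, $\alpha\beta=det_s(A)$). Then $det_s(A)\in J(R)$ while $tr(A)\in 1+J(R)\subseteq U(R)$, so $A\notin J(M_2(R;s))$ and $I_2-A\notin J(M_2(R;s))$ since $det_s(I_2-A)=1-tr(A)+det_s(A)\in J(R)$ forces... here I instead check directly: $A$ is not a unit (as $det_s(A)\in J(R)$) and $I_2-A$ is not a unit (as $det_s(I_2-A)=1-tr(A)+det_s(A)=(1-\alpha)(1-\beta)\in J(R)$, since $1-\beta\in 1+J(R)=U(R)$... wait $1-\beta\in U(R)$ but $1-\alpha\in J(R)$, so the product is in $J(R)$). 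By Lemma~\ref{gerekli} every upper triangular matrix is strongly clean, and using that $A$ is similar to an upper triangular matrix with diagonal $\alpha,\beta$ after an equivalence-to-diagonal argument as in Lemma~\ref{idempotent}/Lemma~\ref{temel} — alternatively, directly verify the hypotheses of Theorem~\ref{temelteo1}(3) or (4) hold — we conclude $A$ is strongly $J$-clean; combining with Theorem~\ref{teo1} (strongly clean plus non-unit plus $I_2-A$ non-unit implies strongly $J$-clean via \cite[Proposition 2.1]{C1}) closes it.

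\medskip

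\noindent\textbf{Case $s\in U(R)$.} Here $M_2(R;s)\cong M_2(R)$ by \cite[Proposition 4]{TZ2}, and this isomorphism preserves $tr$, $det_s$ (mapping to the ordinary determinant), $J$, and strong $J$-cleanness; so the statement reduces to \cite[Theorem 5.5]{C1} for $M_2(R)$ over a commutative local ring, which is exactly (1)/(2)/(3). Alternatively, argue intrinsically: by Lemma~\ref{lem7}, a strongly $J$-clean $A$ outside the trivial cases is similar to $\mathrm{diag}(v,w)$ with $v\in 1+J(R)$, $w\in J(R)$, whence $tr(A)=v+w\in U(R)$, $det_s(A)=vw\in J(R)$, and $t^2-tr(A)t+det_s(A)=(t-v)(t-w)$ has the required roots; the converse runs as in the $s\in J(R)$ case, using Lemma~\ref{gerekli} and Lemma~\ref{lem7}(3).

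\medskip

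The main obstacle I anticipate is the converse direction — producing, from a root $\alpha\in J(R)$ (or $1+J(R)$) of the $s$-characteristic polynomial, an actual strongly $J$-clean decomposition of $A$. The clean point is that in the commutative case the two separate root conditions of Theorem~\ref{temelteo1} collapse to a single condition on the one polynomial $t^2-tr(A)t+det_s(A)$, and Vieta's formulas guarantee that a root in $J(R)$ automatically pairs with a root in $1+J(R)$ (since their sum is the unit $tr(A)$ and their product lies in $J(R)$), so one really only needs \emph{one} root in $J(R)\cup(1+J(R))$; I should state this reduction carefully. A secondary subtlety is confirming that the "$A,I_2-A$ not units" hypothesis needed to invoke Theorem~\ref{teo1}/\ref{temelteo1} genuinely follows from $A,I_2-A\notin J(M_2(R;s))$ together with strong $J$-cleanness — this is where Lemma~\ref{lem4} and Example~\ref{ex1}(3) do the work.
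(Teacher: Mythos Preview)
Your argument is correct, but it takes a noticeably longer path than the paper's in both directions.

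\textbf{Forward direction.} For $s\in J(R)$ you route through Theorem~\ref{temelteo1} (the $\left[\begin{smallmatrix}u&1\\v&w\end{smallmatrix}\right]$ form with its two quadratic conditions) and then use commutativity to collapse those quadratics to the single $s$-characteristic polynomial. This works, but it is unnecessary: Lemma~\ref{lem7} already applies for \emph{both} $s\in U(R)$ and $s\in J(R)$ and hands you $A\sim\mathrm{diag}(v,w)$ with $v\in 1+J(R)$, $w\in J(R)$ directly. The paper simply invokes Lemma~\ref{lem7} in either case, reads off $tr(A)=v+w$ and $det_s(A)=vw$ via Lemma~\ref{temel2}(3), and observes that $t^2-tr(A)t+det_s(A)=(t-v)(t-w)$ has the required roots. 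You rediscover this for $s\in U(R)$ in your ``alternatively'' clause; it is the whole argument.

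\textbf{Converse direction.} Here your sketch (``$A$ is similar to an upper triangular matrix after an equivalence-to-diagonal argument as in Lemma~\ref{idempotent}/Lemma~\ref{temel}, or verify Theorem~\ref{temelteo1}(3)/(4)'') is where the paper is more concrete. The paper does not split on $s$; it argues uniformly: from the roots $x\in 1+J(R)$, $y\in J(R)$ one gets $det_s(A)=xy\in J(R)$ and $tr(A)=x+y\in 1+J(R)$, hence $A$ and $I_2-A$ are non-units. Then, writing $A=\left[\begin{smallmatrix}a&b\\c&d\end{smallmatrix}\right]$ and assuming (after the case split) $a\in U(R)$, $d\in J(R)$, it exhibits the explicit conjugator
\[
P=\begin{pmatrix}1&0\\ c(x-d)^{-1}&1\end{pmatrix}
\]
(with $x-d\in U(R)$ since $x\in 1+J(R)$, $d\in J(R)$) for which $P^{-1}AP$ is upper triangular. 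Lemma~\ref{gerekli} then gives strong cleanness, and Theorem~\ref{teo1} upgrades this to strong $J$-cleanness because neither $A$ nor $I_2-A$ is a unit. Your route via Theorem~\ref{temelteo1} for $s\in J(R)$ is valid but covers only that case; your appeal to \cite[Theorem~5.5]{C1} for $s\in U(R)$ is a citation to the wrong result (that theorem is the analogue of Theorem~\ref{teo1}, not of the present theorem). The explicit triangularization above is what closes both cases at once.
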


\begin{proof} ``$\Rightarrow:$" We may assume that $A\notin J\big(M_2(R;s)\big)$ and
$I_2-A\notin J\big(M_2(R;s)\big)$. If $s\in J(R)$, then either $A\sim \left[%
\begin{array}{cc}
 1+v & 0\\
  0 & w \\
\end{array}%
\right]$ or $A\sim \left[%
\begin{array}{cc}
 v & 0\\
  0 & 1+w \\
\end{array}%
\right]$, where $v,w\in J(R)$ by Lemma~\ref{lem7}. Let $A\sim B=\left[%
\begin{array}{cc}
 v & 0\\
  0 & 1+w \\
\end{array}%
\right]$. In view of Lemma~\ref{temel2}, we have
$tr(A)=tr(B)=v+w+1$ and $det_s(A)=det_s(B)=v(1+w)$. Hence, the
$s$-characteristic polynomial $x^2-tr(A)x+det_s(A)=0$ has a root
$v\in J(R)$ and a root
$1+w\in 1+J(R)$. If $A\sim \left[%
\begin{array}{cc}
 1+v & 0\\
  0 & w \\
\end{array}%
\right]$ then, analogously, we show that $(2)$ holds. If $s\in U(R)$, then $A\sim \left[%
\begin{array}{cc}
 1+v & 0\\
  0 & w \\
\end{array}%
\right]$, where $v,w\in J(R)$ by Lemma~\ref{lem7}. Similarly, we
see that $(2)$ holds.

``$\Leftarrow:$" Let $A=\left[%
\begin{array}{cc}
 a & b\\
 c & d \\
\end{array}%
\right]\in M_2(R;s)$. If either $(1)$ or $(2)$ holds, then $A$ is
strongly $J$-clean in $M_2(R;s)$. Suppose that the
$s$-characteristic polynomial $t^2-tr(A)t+det_s(A)=0$ has a root
$y\in J(R)$ and a root $x\in 1+J(R)$. Since
$det_s(A)=ad-s^2bc=xy\in J(R)$ and $tr(A)=a+d=x+y\in 1+J(R)$,
$A\notin U\big(M_2(R;s)\big)$ and $A-I_2\notin
U\big(M_2(R;s)\big)$. So one of $a, d$ must be in $1+J(R)$ and the
other must be in $J(R)$. Without loss of generality we may assume
that $a\in U(R)$. Let $P=\left[%
\begin{array}{cr}
 1 & 0\\
 c(x-d)^{-1} & 1 \\
\end{array}%
\right]$. By Lemma~\ref{temel2}(2),  $P\in U\big(M_2(R;s)\big)$
and easy calculation shows that $P^{-1}AP$ is an upper triangular
matrix in $M_2(R;s)$. In view of Lemma~\ref{gerekli}, $A$ is a
non-trivial strongly clean element of $M_2(R;s)$. By
Theorem~\ref{teo1}, $A\in M_2(R;s)$ is strongly $J$-clean.
\end{proof}


\begin{thebibliography}{99}

\bibitem{C1} H. Chen, \emph{On strongly $J$-clean rings}, Comm. Algebra  38(2010), 3790-3804.

\bibitem{C} H. Chen, Rings Related Stable Range Conditions,
Series in Algebra 11, World Scientific, Hackensack, NJ, 2011.



\bibitem{C2} H. Chen, {\it Strongly J-clean matrices over local rings}, Comm. Algebra 40(4)(2012),
 1352-1362.

\bibitem{D} A. J. Diesl, Classes of Strongly Clean Rings, Ph.D. Thesis, University of California, Berkeley, 2006.


\bibitem{K} P. A. Krylov, \emph{Isomorphism of generalized matrix rings}, Algebra
Logic 47(4)(2008), 258-262.

\bibitem{KT} P. A. Krylov and A.A. Tuganbaev, \emph{Modules over formal matrix rings},
J. Math. Sci. 171(2)(2010), 248-295.

\bibitem{N} W. K. Nicholson, \emph{Lifting idempotents and exchange rings}, Trans.
Amer. Math. Soc. 229(1977), 269--278.

\bibitem{N2} W. K. Nicholson, \emph{Strongly clean rings and Fitting's lemma}, Comm. Algebra 27(1999), 3583--3592.

\bibitem{TZ} G. Tang and Y. Zhou, \emph{Strong cleanness of
generalized matrix rings over a local ring}, Linear Algebra Appl.
437(10)(2012), 2546-2559.

\bibitem{TZ2} G. Tang and Y. Zhou, \emph{A class of formal matrix
rings}, Linear Algebra Appl. 438(12)(2013), 4672-4688.

\bibitem{YZ} X. Yang and Y. Zhou, \emph{Strongly cleanness of the $2\times 2$ matrix ring over a general local ring}. J. Algebra 320(2008), 2280-2290.

\end{thebibliography}
\end{document}